\theoremstyle{plain}
  \newtheorem{thm}{Theorem}
  \newtheorem{defi}[thm]{Definition}
\newtheorem{conj}[thm]{Conjecture}
  \newtheorem{prop}[thm]{Proposition}
  \newtheorem{cor}[thm]{Corollary}
  \newtheorem{lemma}[thm]{Lemma}
\theoremstyle{definition}
  \newtheorem{ex}[thm]{Example}
  \newtheorem{rem}[thm]{Remark}
\definecolor{Gray}{gray}{0.9}
\definecolor{LightGray}{gray}{0.95}
\newcolumntype{g}{>{\columncolor{Gray}}c}
\newcolumntype{M}{V{3cm}}
\newcolumntype{D}{V{4cm}}
\newcolumntype{H}{V{4cm}}
\newcolumntype{F}{p{.45cm}} 
\tikzset{ext/.style={circle, draw,inner sep=1pt},int/.style={circle,draw,fill,inner sep=1pt},nil/.style={inner sep=1pt}}
\tikzset{exte/.style={circle, draw,inner sep=3pt},inte/.style={circle,draw,fill,inner sep=3pt}}
\tikzset{diagram/.style={matrix of math nodes, row sep=3em, column sep=2.5em, text height=1.5ex, text depth=0.25ex}}
\tikzset{diagram2/.style={matrix of math nodes, row sep=0.5em, column sep=0.5em, text height=1.5ex, text depth=0.25ex}}
\tikzset{every picture/.style={baseline=-.65ex}}
\newcommand{\R}{{\mathbb{R}}}
\newcommand{\Q}{\mathbb{Q}}
\newcommand{\mF}{\mathcal{F}}
\newcommand{\Com}{\mathsf{Com}}
\newcommand{\bCom}{\mathbf B\Com}
\newcommand{\bpm}{\begin{pmatrix}}
\newcommand{\epm}{\end{pmatrix}}
\newcommand{\rank}{\mathrm{rank}\,}
\newcommand{\G}{\mathrm{G}}
\newcommand{\GMe}{\G^{\mathrm{Me}}}
\newcommand{\HG}{\mathsf{HG}}
\newcommand{\CHG}{\mathsf{CHG}}
\DeclareMathOperator{\sgn}{sgn}
\DeclareMathOperator{\gr}{gr}
\newcommand{\beq}[1]{\begin{equation}\label{#1} 
}
\newcommand{\eeq}{\end{equation}}
\DeclareMathOperator{\vimg}{\mathrm{im}}
\newcommand{\FG}{\mathsf{FG}}
\newcommand{\FGC}{\mathsf{FGC}}
\newcommand{\Out}{\mathrm{Out}}
\newcommand{\lp}{\text{-loop}}
\newcommand{\exc}{\text{-exc}}
\newcommand{\Aut}{\mathrm{Aut}}
\newcommand{\Det}{\mathrm{Det}}
\newcommand{\GH}{GH }
\tikzset{fatedge/.style={line width=.5mm}}
\tikzset{pbrace/.style={decorate, decoration = {brace}}}
\DeclareMathOperator{\mim}{\mathrm{im}}
\renewcommand{\FGC}{\FG}
\definecolor{Gray}{gray}{0.9}
\definecolor{LightGray}{gray}{0.95}
\begin{document}

\title{Graph homology computations}

\author[S. Brun]{Simon~Brun}
\address{Department of Mathematics, ETH Zurich, Zurich, Switzerland}
\email{simon.brun@math.ethz.ch}

\author[T. Willwacher]{Thomas~Willwacher}
\address{Department of Mathematics, ETH Zurich, Zurich, Switzerland}
\email{thomas.willwacher@math.ethz.ch}

\subjclass[2000]{16E45; 53D55; 53C15; 18G55}
\keywords{}

\begin{abstract}
  We compute numerically the homology of several graph complexes in low loop orders, extending previous results.
\end{abstract}

\maketitle

\section{Introduction}
\label{sec:introduction}
Graph complexes are differential graded vector spaces of linear combinations of isomorphism classes of graphs.
There exist many variants, using different types of graphs, for example, simple graphs, ribbon graphs, directed acyclic graphs and many more.
The differential on graph complexes is typically given by the operation of edge contraction, though again variants exist.

Graph complexes play a central role in many areas of algebraic topology and homological algebra. Most often, they constitute a combinatorial encoding of a homotopy theoretic problem.
To name a few examples, complexes of simple graphs encode automorphisms of the chains operad of the little disks operad \cite{Fresse2017b, Willwacher2014}, hairy graphs govern the rational homotopy type of higher dimensional knot spaces \cite{FresseTurchinWillwacher2017} and the ribbon graph homology computes the homology of moduli spaces of curves \cite{Penner88}.

In most cases, little is known about the homology of the complexes, the graph homology, which has a rich and complicated structure, inherited from the homotopy theoretic problems the graph complex encodes.
At the same time it is relatively hard for a mathematician working in the area to come up with, or rule out, conjectures or statement about that structure. This is due mostly to the quickly growing combinatorial complexity of graphs, which makes it difficult to conduct computations by hand.
Furthermore, there is typically a significant time cost associated to implementing such computations on a computer, paired with a non-zero probability of eventually obtaining incorrect answers due to programming mistakes.

Here we report on computer experiments, determining the dimensions of the graph homology in low degrees for several common graph complexes. We furthermore test and verify or support numerically some existing results and conjectures on the graph homology, at least in low loop orders.
We also present a new conjecture (Conjecture \ref{conj:morita new} below) about the forested graph homology, inspired by our computational results.
The reader interested in the numerical results about graph complexes only, and not in preliminaries, is invited to jump directly to the data tables in section \ref{sec:experiments}.
Mind however that a significant part of the computations has been performed over finite fields for memory reasons -- for a discussion about the validity over $\Q$ please consult the "Methods" subsections within section \ref{sec:experiments}.

To conduct the experiments, we wrote the \GH framework and library, discussed briefly in section \ref{sec:GH} below. \GH contains classes and routines for handling the generation of bases of graph complexes, implementing linear operators on such complexes, managing data files produced, and visualising results.
The \GH library is an attempt to unify numerical experiments on graph complexes, which have appeared scattered throughout the literature \cite{BarNatanMcKay, Murri2012,BuringKiselevRutten2017}, and to enable other reasearchers to implement similar experiments relatively quickly.

\subsection*{Acknowledgements}
We are greatly indebted to the IT support group of the mathematics department of ETH Zurich, who helped significantly to get our code to run on the departmental servers.

\section{Graph complexes}

In this section we introduce the graph complexes whose homology we study numerically in this paper.
All our graph complexes will be differential graded vector spaces over a fixed ground ring $R$, that is usually $R=\mathbb Q$. We use homological conventions, that is, our differentials have degree -1.

\subsection{The Kontsevich graph complex}
\newcommand{\ori}{\mathit{or}}
Possibly the simplest graph complexes are the Kontsevich graph complexes $\G_n$, for $n$ an integer.

We define an \emph{admissible graph} to be a connected
1-vertex irreducible simple graph all of whose vertices have valence $\geq 3$.
Here, simple means that the graph is undirected without multiple edges or edges connecting a vertex to itself. One-vertex irreducibility means that deleting any single vertex leaves the graph connected.

An orientation of an admissible graph $\gamma$ are the following data, depending on the parity of $n$.
\begin{itemize}
\item For $n$ even, an orientation is an ordering of the set of edges of $\gamma$, up to even permutations.
We will write 
$$\ori=e_1\wedge e_2\wedge \cdots \wedge e_k$$ 
for the ordering $e_1>e_2 >\cdots > e_k$ of the set of edges $E\Gamma=\{e_1,\dots,e_k\}$ of $\gamma$.
\item For $n$ odd, an orientation is an ordering of the set of vertices and half-edges, up to even permutations. Let the set of vertices be $V\gamma=\{v_1,\dots,v_p\}$ and the set of half-edges be $H\gamma=\{h_1,\dots,h_{2k}\}$. Then the orientation is an ordering of the set $H\gamma\sqcup V\gamma$ and, as above, we denote such an orientation by a wedge product as follows:
$$\ori = h_1\wedge \cdots \wedge h_{2k} \wedge v_1\wedge \cdots \wedge v_p.$$ 
\end{itemize}
In any case, there are only two different orientations for an admissible graph. Given some orientation $\ori$, we denote by $-\ori$ the other permutation.
An oriented graph is a pair $(\gamma,\ori)$ of an admissible graph $\gamma$ and an orientation $\ori$ of $\gamma$.
Two oriented graphs $(\gamma,\ori)$ and $(\gamma',\ori')$ are called isomorphic if there is an isomorphism between $\gamma$ and $\gamma'$ that maps $\ori$ to $\ori'$.

Elements of the Kontsevich graph complex $\G_n$ are then $R$-linear combinations of oriented graphs, modulo the following relations.
\begin{itemize}
\item We identify isomorphic oriented graphs.
\item We identify opposite orientations up to sign, $(\gamma,\ori)=-(\gamma,-\ori)$.
\end{itemize}
For example, for $n$ even this means that for any permutation $\sigma\in S_k$
\[
(\gamma, e_1\wedge e_2\wedge \cdots \wedge e_k) 
=(-1)^\sigma  (\gamma, e_{\sigma(1)}\wedge  \cdots \wedge e_{\sigma(k)}), 
\]
thus justifying the wedge product notation.
For illustration purposes, we depict elements of $\G_n$ as linear combinations of graphs, for example
  \[
    \begin{tikzpicture}[baseline=-.65ex, scale=.5]
\node[int] (c) at (0,0){};
\node[int] (v1) at (0:1) {};
\node[int] (v2) at (72:1) {};
\node[int] (v3) at (144:1) {};
\node[int] (v4) at (216:1) {};
\node[int] (v5) at (-72:1) {};
\draw (v1) edge (v2) edge (v5) (v3) edge (v2) edge (v4) (v4) edge (v5)
      (c) edge (v1) edge (v2) edge (v3) edge (v4) (c) edge (v5);
\end{tikzpicture}
+
\frac 5 2\,
    \begin{tikzpicture}[baseline=-.65ex, scale=.5]
\node[int] (c) at (0.7,0){};
\node[int] (v1) at (0,-1) {};
\node[int] (v2) at (0,1) {};
\node[int] (v3) at (2.1,-1) {};
\node[int] (v4) at (2.1,1) {};
\node[int] (d) at (1.4,0) {};
\draw (v1) edge (v2) edge (v3)  edge (d) edge (c) (v2) edge (v4) edge (c) (v4) edge (d) edge (v3) (v3) edge (d) (c) edge (d);
\end{tikzpicture} \, \in \G_n,
  \]
where we leave implicit the orientation.
The vector space $\G_n$ is naturally graded.
More concretely, we declare that a graph $\gamma$ with $v$ vertices and $e$ edges has homological degree
\[
|\gamma|  = n(v-1) -(n-1)e 
\]

We define a linear operator $d$ of degree $+1$ on $G_n$ by contracting edges.
For $n$ even we set 
\begin{equation}\label{equ:d def ordinary}
d(\gamma, e_1\wedge  \cdots \wedge e_k) = \sum_{j=1}^k (-1)^{k+1} ( \gamma / e_j, e_1\wedge  \cdots \wedge \hat e_j \wedge \cdots \wedge e_k),
\end{equation}
where the graph $\gamma/e_j$ is obtained by contracting the edge $e_j$. If the graph $\gamma/e$ is not simple, then we set the term to zero.
We also write this same formula \eqref{equ:d def ordinary} as 
\[
  d(\gamma, \ori)
  =
  \sum_{e\in E\gamma}
  (\gamma/e, \partial_e \ori),
\]
where the sum is over edges $e$ of $\gamma$, and $\partial_e \ori$ is the derivative with respect to a formal variable corresponding to the edge $e$, of the orientation, considered as a monomial of the variables corresponding to edges.

For $n$ odd the formula is similar, but we need some notation.
For an edge $e\in E\gamma$ let us denote by $v_e^1$, $v_e^2$ the vertices connected by $e$, and by $h_e^1$, $h_e^2$ the two half-edges of $e$, in the same order. 
That is, $h_e^j$ is incident to $v_e^j$. The graph $\gamma/e$ is obtained by merging $v_e^1$ and $v_e^2$ to one new vertex, that we call $v_e$.
\[
  \gamma =
\begin{tikzpicture}
  \node[int, label=90:{$ v_e^1$}] (v) at (0,0) {};
  \node[int, label=90:{$ v_e^2$}] (w) at (2,0) {};
  \draw (v) edge node[above] {$e$} node[below, pos=.25] {$\scriptstyle h_e^1$} node[below, pos=.75] {$\scriptstyle h_e^2$} (w) edge +(-.5,.5) edge +(-.5,-.5) edge +(-.5,0)
  (w) edge +(.5,.5) edge +(.5,-.5) edge +(.5,0);
\end{tikzpicture}
\xrightarrow{\text{contract}}
\begin{tikzpicture}
  \node[int, label=90:{$v$}] (v) at (0,0) {};
  \draw (v) edge +(-.5,.5) edge +(-.5,-.5) edge +(-.5,0)
  edge +(.5,.5) edge +(.5,-.5) edge +(.5,0);
\end{tikzpicture}
=\gamma/e
\]

Then we define, for odd $n$,
\begin{equation}\label{equ:d def ordinary 2}
  d(\gamma, \ori) = \sum_{e\in E\gamma}
  (\gamma/e, v_e\wedge ( \partial_{v_e^1}\partial_{v_e^2}\partial_{h_e^1}\partial_{h_e^2} \ori)),
\end{equation}
using analogous partial derivative notation as above.

It is an exercise to check that $d$ is actually well-defined and that $d^2=0$.
The Kontsevich graph homology is then 
\[
H(\G_n)=\ker d/\mim d.
\]

Note that in the definition of $\G_n$ the integer $n$ enters only in the above degree convention and in the definition of orientation, and in the latter only up to parity. Hence, essentially, there are only two fundamentally distinct graph complexes defined here, $G_n$ for even $n$ and $G_n$ for odd $n$.

\begin{rem}
Our graph complex $\G_n$ is the smallest version of the Kontsevich graph complex. Analogous complexes allowing a larger class of admissible graphs have been considered in the literature, but have essentially the same homology.
For example, dropping the 1-vertex-irreducibility condition leads to a quasi-isomorphic complex \cite[Appendix F]{Willwacher2014}.
Allowing vertices of any valence leads (with the appropriate extension of the differential) to a complex that is quasi-isomorphic to $G_n$ in loop orders $>1$.
Dropping the simplicitly condition by allowing tadpoles or multiple edges yields a graph homology that is larger by at most one dimension \cite{Willwacher2014,WillwacherZivkovic15}. 
Finally, one can allow graphs to be directed with $\geq 2$-valent vertices, and also obtain a quasi-isomorphic complex \cite[Appendix K]{Willwacher2014}.
\end{rem}

\subsubsection{Merkulov's subcomplex}
The graph complex $\G_n$ comes with a descending bounded above filtration 
\[
\G_n = \mF_3 \G_n \supset \mF_4 \G_n \supset \cdots 
\]
such that $\mF_p \G_n\subset \G_n$ is spanned by graphs whose highest-valent vertex has valence $\geq p$. This filtration is compatible with the differential $d$, $d\mF_p \G_n\subset \mF_p\G_n$, since the contraction of edges can only increase valences of vertices, but not decrease them.

Let 
\[
\pi_{5} \colon \G_n \to \mF^5 \G_n
\]
be the natural projection of graded vector spaces, defined such that $\pi_5\Gamma=0$ if the graph $\Gamma$ has only vertices of valence 3 and 4, and $\pi_5\Gamma=\Gamma$ otherwise.
We may then consider Merkulov's truncated graph complex
\[
\GMe_n = \{\Gamma\in \G_n \mid \pi_5\Gamma = \pi_5 d\Gamma = 0\} \subset \G_n.
\]
The differential $d$ obviously restricts to $\GMe_n$.

\begin{conj}[Merkulov's conjecture]\label{conj:merkulov}
The inclusion of dg vector spaces $\GMe_n \subset \G_n$ induces a homology isomorphism 
\[
  H_k(\GMe_n) \cong H_k(\G_n)
\]
for all $n$ and $k$.
\end{conj}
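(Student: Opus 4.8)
Since Conjecture \ref{conj:merkulov} is open, we only outline a plausible line of attack and indicate where it stalls. The plan is to show that the quotient complex $C := \G_n/\GMe_n$ is acyclic; by the long exact sequence of $0\to\GMe_n\to\G_n\to C\to 0$ this is equivalent to the inclusion being a quasi-isomorphism. Observe first that $\GMe_n$ already contains every purely trivalent admissible graph, since contracting an edge of such a graph yields at most a $4$-valent vertex, so $\pi_5 d\Gamma=0$ holds automatically; hence all of the content sits in the interplay between the $4$-valent graphs and the graphs having a vertex of valence $\geq 5$. One would then use the valence filtration $\mF_\bullet\G_n$ of the text, which descends to a filtration of $C$, and pass to the associated spectral sequence. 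On the $E_0$-page the induced differential retains only those edge contractions that do not raise the current top valence, and one finds $\gr C \cong \bigoplus_{p\geq 4} C^{(p)}$, where $C^{(p)}$ is built from the graphs in $C$ of maximal valence exactly $p$, with the differential contracting only edges $e=(v,w)$ such that $\mathrm{val}(v)+\mathrm{val}(w)\leq p+2$ (so never an edge incident to a $p$-valent vertex); the summand $p=3$ is absent precisely because every trivalent graph already lies in $\GMe_n$.

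The next step is to prove that each $C^{(p)}$ is acyclic. For $p\geq 5$ the $p$-valent vertices are inert under the differential, so $C^{(p)}$ looks like a complex of graphs carrying one or more marked high-valence vertices whose incident half-edges play the role of hairs; the natural hope is to match $C^{(p)}$, after a degree shift and a decomposition according to the positions of the marked vertices, with (products of) hairy graph complexes, and then to invoke a known vanishing of hairy graph homology in the relevant range of degrees, or alternatively to write down an explicit contracting homotopy that splits off a bivalent vertex at a marked vertex. For $p=4$ one must instead analyse the quotient by $\GMe_n$ of the graphs of maximal valence $\leq 4$ directly, where the surviving differential only contracts $(3,3)$-edges; here a discrete-Morse-type matching, pairing each remaining graph with a canonically chosen $(3,3)$-edge to contract, looks most promising.

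The hard part --- and, we believe, the reason the conjecture remains open --- is exactly this acyclicity of the high-valence pieces. Under edge contraction a large valence can only be created, never destroyed, so there is no naive homotopy ``removing'' a vertex of valence $\geq 5$, and what one needs is a genuine statement about hairy graph homology (or an equivalent object), itself only partially known, rather than a formal cancellation. A further, more technical, annoyance is that $\GMe_n$ need not be spanned by graphs: for a $4$-valent $\Gamma$ the vanishing of $\pi_5 d\Gamma$ can arise through cancellation among the simple contractions rather than termwise, so even an honest combinatorial description of $C$ requires care. We therefore expect that any complete proof must combine the spectral-sequence reduction above with either a sharp input on low-degree hairy graph homology or an induction on the loop order whose base cases are furnished exactly by the computations of Section \ref{sec:experiments}.
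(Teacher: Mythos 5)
This statement is a conjecture, and the paper offers no proof of it: it is only tested numerically, up to loop order $10$, by the computations reported in Figure \ref{fig:merkulov} (with the caveat, stated in the Methods subsection there, that the argument guaranteeing exactness of the modular rank computations does not apply, so even the numerical verification is only ``likely correct''). You correctly recognize that the conjecture is open and do not claim a proof, so there is nothing to fault on that score. Your sketch itself is sound as far as it goes: the observation that purely trivalent graphs automatically lie in $\GMe_n$, the reduction via the valence filtration $\mF_\bullet$ to the acyclicity of the graded pieces of $\G_n/\GMe_n$, and the identification of the genuine obstruction --- that edge contraction can only create, never destroy, a high-valent vertex, so no naive homotopy kills the $p\geq 5$ pieces --- all match what one would expect any serious attack to look like. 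Your closing caveat that $\GMe_n$ is cut out by the condition $\pi_5 d\Gamma=0$ and hence need not be spanned by graphs is also a real subtlety that a careful treatment of the quotient and its filtration would have to address. Since the paper contributes only numerical evidence here, there is no argument of the authors to compare yours against.
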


\subsection{Hairy graph complex}
We define an \emph{admissible hairy graph} to be a connected
1-vertex irreducible simple graph all of whose vertices have valence $1$ or valence $\geq 3$.
We call the vertices of valence 1 the hairs (or legs) of the graph, and the other vertices the internal vertices.
In pictures, we only draw the internal vertices.

   \[
\begin{tikzpicture}[scale=.6]
\node[int] (v) at (0,0){};
\draw (v) -- +(90:1) (v) -- ++(210:1) (v) -- ++(-30:1);
\end{tikzpicture}
\,,\quad
\begin{tikzpicture}[scale=.5]
\node[int] (v1) at (-1,0){};\node[int] (v2) at (0,1){};\node[int] (v3) at (1,0){};\node[int] (v4) at (0,-1){};
\draw (v1)  edge (v2) edge (v4) -- +(-1.3,0) (v2) edge (v4) (v3) edge (v2) edge (v4) -- +(1.3,0);
\end{tikzpicture}
 \, ,\quad
 \begin{tikzpicture}[scale=.6]
\node[int] (v1) at (0,0){};\node[int] (v2) at (180:1){};\node[int] (v3) at (60:1){};\node[int] (v4) at (-60:1){};
\draw (v1) edge (v2) edge (v3) edge (v4) (v2)edge (v3) edge (v4)  -- +(180:1.3) (v3)edge (v4);
\end{tikzpicture}
 \, .
 \]
 Depending on (the parity of) two fixed integers $m$ and $n$ we define the orientation of an admissible hairy graph $\gamma$ to be the following.
 \begin{itemize}
 \item For $n$ even, an orientation contains a linear ordering of the set of edges of $\gamma$, up to even permutations.
 For $n$ odd, an orientation contains an ordering of the set of vertices and half-edges, up to even permutations.
 In either case, the edges at the hairs also count as edges for the orientation.
 \item In addition, for $m$ odd the orientation contains an ordering of the set of hairs, up to even permutations.
 \end{itemize}
 
 Again, there are only two different orientations for an admissible graph.
 An oriented hairy graph is a pair $(\gamma,\ori)$ of an admissible hairy graph $\gamma$ and an orientation $\ori$ of $\gamma$.
 Two oriented hairy graphs $(\gamma,\ori)$ and $(\gamma',\ori')$ are called isomorphic if there is an isomorphism between $\gamma$ and $\gamma'$ that maps $\ori$ to $\ori'$.
 
 Elements of the hairy graph complex $\HG_{m,n}$ are then $R$-linear combinations of oriented graphs, modulo the relations of identifying isomorphic oriented graphs, and identifying $(\gamma,\ori)=-(\gamma,-\ori)$.
 The vector space $\HG_{m,n}$ is typically graded by assigning a hairy graph $\gamma$ with $v$ internal vertices, $e$ edges and $h$ hairs the homological degree 
 \[
 |\gamma| = nv+ m(h-1)-(n-1)e .  
 \] 
We may define a differential $d:\HG_{m,n}\to \HG_{m,n}$ by the same formulas \eqref{equ:d def ordinary} and  \eqref{equ:d def ordinary 2} as in the previous section, just with the caveat that we do not contract edges that are connected to a univalent (hair) vertex.

The graph complexes $\HG_{m,n}$ compute in particular the duals of the rational homotopy groups of embedding spaces $\mathbb R^m\to \R^n$, see \cite{AroneTurchin2015, FresseTurchinWillwacher2017} for the precise formulation.

\subsubsection{Variant with numbered hairs}
In the above graph complex $\HG_{m,n}$ we did not distinguish hairs in that we allowed isomorphisms of graphs to permute the hairs. One can generalize the definition and require that the hair vertices are numbered from $1$ to $r$, each number occurring exactly once, and require that graph isomorphisms must send each hair to a hair with the same number.
\[
\begin{tikzpicture}[scale=.6]
  \node[int] (v) at (0,0){};
  \node (v1) at (90:1.3) {$1$};
  \node (v2) at (-30:1.3) {$2$};
  \node (v3) at (-150:1.3) {$3$};
  \draw (v) edge (v1) edge (v2) edge (v3);
  \end{tikzpicture}
  \,,\quad
  \begin{tikzpicture}[scale=.5]
    \node (n1) at (180:2.5) {$1$};
    \node (n2) at (0:2.5) {$2$};
  \node[int] (v1) at (-1,0){};\node[int] (v2) at (0,1){};\node[int] (v3) at (1,0){};\node[int] (v4) at (0,-1){};
  \draw (v1) edge (v2) edge (v4) edge (n1) 
  (v2) edge (v4) 
  (v3) edge (v2) edge (v4) edge (n2);
  \end{tikzpicture}
\]
An orientation of such a graph with numbered hairs is defined as before, but omitting the ordering of hairs, and hence omitting reference to the number $m$.

There generalized or colored hairy graph complex $\CHG_{n}(r)$ is spanned by oriented admissible hairy graphs with $r$ numbered hairs, modulo the same equivalence relations as above.
The differential $d$ is again given by edge contraction as before.

The complex $\CHG_{n}(r)$ carries a natural action of the symmetric group $S_r$ by renumbering hairs, and $\HG_{m,n}$ can be obtained from $\CHG_{n}(r)$ by taking $S_r$-(anti-)invariants and a direct sum over $r$. 

The complexes $\CHG_{n}(r)$ also appear in the literature. In particular Chan-Galatius-Payne \cite{CGP2} have shown that for $n$ even their homology computes the top weight part of the homology of the moduli spaces of curves, up to degree shifts.
Furthermore, the rational homotopy groups of link spaces may be expressed through the homology of $\CHG_{n}(r)$, see 
\cite{SonghafouoTsopmeneTurchin2015}.

\subsubsection{Vanishing result}
There are known vanishing results for the hairy graph homology, outside a certain range. We formulate them here in terms of the number of vertices rather than the homological degree.

\begin{prop}\label{prop:HGC vanishing}
Let $V_{g,h}$ represent the part of loop order $g$ and $h$ hairs of either of the complexes $\HG_{m,n}$ or $\CHG_n(r)$.
Then the nontrivial homology of $V_{g,h}$ can be represented by linear combinations of graphs with their number of internal vertices $v$ in the range 
\[
g+h-2 \leq v \leq 2g+h-2.
\]
\end{prop}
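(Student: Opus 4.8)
The plan is to establish the two bounds by completely different means: the upper one holds already at the level of chains, while the lower one is the substantive part. For the upper bound, let $\gamma$ be an admissible hairy graph of loop order $g$ with $v$ internal vertices, $h$ hairs and $e$ edges (counting hair-edges). Connectedness gives the Euler relation $e=g+v+h-1$, while the valence constraints (internal vertices of valence $\ge 3$, hairs of valence $1$) give $2e=\sum_x\mathrm{val}(x)\ge 3v+h$. Combining, $2(g+v+h-1)\ge 3v+h$, hence $v\le 2g+h-2$, with equality precisely for internally trivalent graphs. Thus $V_{g,h}$ is the zero complex in the degrees corresponding to $v>2g+h-2$, and the upper bound requires no argument.

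For the lower bound I would first note that, feeding $e=g+v+h-1$ into the degree formula $|\gamma|=nv+m(h-1)-(n-1)e$ (and the analogous formula for $\CHG_n(r)$), one gets $|\gamma|=v+c$ for a constant $c=c(g,h,m,n)$; hence $V_{g,h}$ is a bounded complex graded, up to the shift $c$, by the number of internal vertices, and the edge-contraction differential lowers $v$ by exactly one (contracting an internal edge merges two internal vertices into one, and does not touch hairs). Consequently the subspace $V_{g,h}^{\le k}\subset V_{g,h}$ spanned by graphs with at most $k$ internal vertices is a subcomplex for every $k$, with quotient $V_{g,h}^{>k}$ spanned by graphs with strictly more. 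I would then reduce the claim to the assertion that $V_{g,h}^{\le g+h-3}$ is acyclic: granting this, the long exact sequence of the pair makes the projection $V_{g,h}\to V_{g,h}^{>g+h-3}$ a quasi-isomorphism, and the target is by construction spanned by graphs with $v\ge g+h-2$; together with the chain-level upper bound this is exactly the statement of the proposition.

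The crux is therefore the acyclicity of $V_{g,h}^{\le g+h-3}$. The obstacle is that graphs with so few internal vertices are forced to be far from trivalent — the average internal valence exceeds $3$ as soon as $v<2g+h-2$, so a vertex of valence $\ge 4$ always exists — yet there is no canonical way to resolve such a vertex, so a naive contracting homotopy built from vertex-splitting fails (splitting and then re-contracting returns the graph with a multiplicity that depends on the graph, not a universal constant). I would instead use the spectral sequence of an auxiliary filtration, for instance one isolating the contractions that take place among the vertices of valence $\ge 4$ (filtering by a weighted count of the trivalent internal vertices), so that the induced differential on the associated graded is simpler. On each fixed ``core skeleton'' the resulting complex should then be identified with a bar- or simplicial-type complex that parametrises the ways of iteratively contracting a dense graph down onto one with few vertices, and which is acyclic because the corresponding poset of subgraphs is contractible. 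Keeping track of orientations and signs throughout — graphs killed by odd automorphisms, and the interaction of the ordering data with hair permutations in the $\HG_{m,n}$ and $\CHG_n(r)$ variants — is the fiddly part I expect to dominate the write-up, and is the main obstacle overall.

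There is also an alternative route, at least for $\CHG_n(r)$ with $n$ even: by the Chan--Galatius--Payne identification \cite{CGP2} the homology at loop order $g$ computes (up to shifts) the top-weight cohomology of $\mathcal{M}_{g,r}$, and under this dictionary the desired bound becomes a connectivity statement for the tropical moduli space $\Delta_{g,r}$, equivalently a restriction on the support of $H^\bullet_c(\mathcal{M}_{g,r})$ coming from Harer's computation of its virtual cohomological dimension. One would still have to check that the translation is faithful within the stated (mildly conservative) range, and to handle the remaining parities either by the direct argument above or by a comparison of complexes. In all cases the upper bound is bookkeeping and the lower bound is where the work lies.
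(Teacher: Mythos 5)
Your upper bound is correct and is exactly the paper's argument: Euler's relation $e=g+v+h-1$ together with $2e\geq 3v+h$ gives $v\leq 2g+h-2$ at the level of chains, so nothing more is needed there. Your observation that the complex is graded by $v$ with the differential homogeneous of degree $-1$, so that the lower bound amounts to vanishing of homology in vertex degrees $v\leq g+h-3$, is also a correct reduction.

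The lower bound, however, is not proved in your write-up, and this is the entire content of the proposition. Your proposed argument stops at precisely the point where work is required: you introduce an unspecified "auxiliary filtration", assert that the associated graded "should" be identified with a bar- or simplicial-type complex on each "core skeleton", and then declare it acyclic "because the corresponding poset of subgraphs is contractible". No filtration is actually defined, the identification is not carried out, and the contractibility claim is unsupported --- it is essentially a restatement of the theorem. You yourself note that the naive contracting homotopy by vertex splitting fails because the multiplicity produced by split-then-contract depends on the graph; that is the real obstruction, and nothing in your sketch overcomes it. The alternative route via Chan--Galatius--Payne and the virtual cohomological dimension of $\mathcal M_{g,r}$ is likewise only gestured at, covers only even $n$ for $\CHG_n(r)$, and you concede the translation is unchecked. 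For comparison, the paper does not prove the lower bound internally either: it cites \cite[Corollary 16]{BrunWillwacher22}, where the vanishing is established by a genuine argument. As written, your proposal establishes only the (easy) upper bound; the lower bound remains a conjecture in your text and the proof cannot be accepted without either completing the spectral sequence argument in detail or invoking the known result.
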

\begin{proof}
The upper bound on $v$ follows by using that the internal vertices must be at least trivalent.
The lower bound is a reformulation of \cite[Corollary 16]{BrunWillwacher22}.
\end{proof}

\subsection{Forested graph complexes}\label{sec:def_forested}
An admissible forested graph with $r$ hairs is the following data:
\begin{itemize}
\item An undirected connected 
 graph with $r$ numbered univalent and an arbitrary number of $\geq 3$-valent vertices.
 The $r$ univalent vertices we call the hairs, and the other vertices internal vertices.
\item In addition, there is a distinguished forest, that is, a subset $T\subset E\gamma$ of the set of edges, that does not contain any edge incident to a hair vertex, and that does not contain loops. We call the set of edges in the forest $T$ the marked edges.
\end{itemize}
Mind that such graphs are a priori allowed to have tadpoles and multiple edges. 
\[
\begin{tikzpicture}
  \node[int] (v1) at (0,0) {};
  \node[int] (v2) at (1,0) {};
  \node[int] (v3) at (1,1) {};
  \node[int] (v4) at (0,1) {};
  \node[] (e1) at (0,-1) {$ 1$};
  \node[] (e2) at (1,-1) {$ 2$};
  \draw (v1) edge (e1) edge[fatedge] (v2) edge (v3) edge (v4) 
  (v2) edge (e2) edge (v3) edge[fatedge] (v4) 
  (v3) edge (v4);
\end{tikzpicture}
\]
Depending on an integer $n\in \{0,1\}$ we define the orientation of an admissible forested graph as the following data:

\begin{itemize}
  \item For $n=0$, an orientation is an ordering of the set of marked edges, up to even permutations.
  \item For $n=1$, an orientation is an ordering of the set of internal vertices, half-edges, and unmarked edges, up to even permutation.
\end{itemize}

We define the graph complex $\FG_n(r)$ to be spanned by triples $(\gamma,T,\ori)$ consisting of a graph $\gamma$  with $r$ hairs, a distinguished forest $T\subset E\gamma$ and an orientation $\ori$, modulo the following equivalence relations:
\begin{itemize}
  \item We identify isomorphic oriented forested graphs. Here isomorphisms are required to preserve the numbering of the hairs, and preserve (set-wise) the distinguished forest.
  \item We identify opposite orientations up to sign, $(\gamma,\ori)=-(\gamma,-\ori)$.
\end{itemize}

We define the homological degree of an admissible forested graph with $m$ internal marked edges to be $m$.
The graded vector spaces $\FG_n(r)$ carry two differentials:
\begin{itemize}
\item The edge contraction differential $d_c$ acts by contracting a marked edge, similar to \eqref{equ:d def ordinary} and \eqref{equ:d def ordinary 2}. For $n$ even we define
\[
d_c(\gamma,T,\ori) = \sum_{e\in T}
(\gamma/e, T\setminus \{e\}, \partial_e \ori ).
\]
For $n$ odd we set 
\[
d_c(\gamma,T,\ori) = \sum_{e\in T}
(\gamma/e, T\setminus \{e\}, v_e\wedge ( \partial_{v_e^1}\partial_{v_e^2}\partial_{h_e^1}\partial_{h_e^2}  \ori)).
\]

\item The unmarking differential $d_u$ acts by unmarking a marked edge. For $n$ even it is defined by 
\[
  d_u(\gamma,T,\ori) =  \sum_{e\in T} (\gamma,T\setminus \{e\},
  \partial_e \ori),
\]
and for $n$ odd,
\begin{equation*}
  d_u(\gamma,T,\ori) =  \sum_{e\in T} (\gamma,T\setminus \{e\},
  e\wedge \ori).
\end{equation*}
\end{itemize}

We leave it as an exercise for the reader to check that the following result, see \cite{Clivio}:
\begin{lemma}
We have $d_c^2 = d_cd_u+d_ud_c=d_u^2=0$.
\end{lemma}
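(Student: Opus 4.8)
\emph{Proof plan.} The plan is to deduce all three identities from one bookkeeping device, the same one behind $d^2=0$ for the Kontsevich complex: expand each composite as a sum over \emph{ordered} pairs of distinct marked edges, observe that the underlying oriented forested graph produced depends only on the \emph{unordered} pair, and check that the two orderings enter with opposite sign. I would treat the two parity conventions $n=0$ and $n=1$ in parallel, abbreviating $D_e=\partial_{v_e^1}\partial_{v_e^2}\partial_{h_e^1}\partial_{h_e^2}$ for the composite of the four contraction derivations at an edge $e$, so that the odd contraction formula reads $d_c(\gamma,T,\ori)=\sum_{e\in T}(\gamma/e,T\setminus\{e\},v_e\wedge D_e\ori)$. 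Before starting I would record two trivialities guaranteeing that every term appearing below is a legitimate generator: a subset of an admissible forest is again an admissible forest, so unmarking stays in the complex; and contracting a marked edge $e\in T$ merges two internal vertices (hairs are univalent and $T$ contains no hair edge) into a $\geq 3$-valent vertex, while $T\setminus\{e\}$ remains acyclic and hair-free in $\gamma/e$, so contracting stays in the complex as well.

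For $d_u^2=0$ one writes
\[
  d_u^2(\gamma,T,\ori)=\sum_{e\in T}\ \sum_{f\in T\setminus\{e\}}\bigl(\gamma,\,T\setminus\{e,f\},\,\ast_{e,f}\bigr),
\]
with $\ast_{e,f}=\partial_f\partial_e\ori$ for $n=0$ and $\ast_{e,f}=f\wedge e\wedge\ori$ for $n=1$. The graph is untouched and $T\setminus\{e,f\}=T\setminus\{f,e\}$, so the summands for $(e,f)$ and $(f,e)$ are the same forested graph, while $\partial_f\partial_e\ori=-\partial_e\partial_f\ori$ and $f\wedge e\wedge\ori=-e\wedge f\wedge\ori$; hence the sum vanishes in pairs. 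The identity $d_c^2=0$ is the familiar edge-contraction computation behind \eqref{equ:d def ordinary}--\eqref{equ:d def ordinary 2}, which I would essentially quote; the one new point is that for distinct $e,f\in T$ the edge $f$ is not a loop in $\gamma/e$ (that would force $\{e,f\}$ to be a $2$-cycle inside the forest $T$), so contracting $e$ and then $f$ is defined and yields $(\gamma/e)/f=(\gamma/f)/e$. The accompanying sign comparison ($\partial_f\partial_e\ori=-\partial_e\partial_f\ori$ for $n=0$, and the analogous computation with the blocks $v_e\wedge D_e$ and $v_f\wedge D_f$ in the two orders for $n=1$) is identical to the one in the proof that the Kontsevich differential squares to zero.

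The cross relation $d_cd_u+d_ud_c=0$ is the point that needs a genuine argument. I would first match terms: in $d_cd_u$ one unmarks a marked edge $f$ and then contracts a marked edge $e\neq f$, which is still marked in $T\setminus\{f\}$; in $d_ud_c$ one contracts a marked edge $e$ and then unmarks a marked edge $f\neq e$. Since contracting does not affect which of the remaining marked edges may be unmarked, and unmarking does not affect which marked edges may be contracted, both composites are indexed by the same set of ordered pairs $(e,f)$ of distinct elements of $T$, and for each such pair both produce the forested graph $\gamma/e$ with distinguished forest $T\setminus\{e,f\}$. It remains to compare orientations. For $n=0$, $d_cd_u$ contributes $\partial_e\partial_f\ori$ while $d_ud_c$ contributes $\partial_f\partial_e\ori$, and these are opposite. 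For $n=1$, $d_cd_u$ contributes $v_e\wedge D_e(f\wedge\ori)=v_e\wedge f\wedge D_e\ori$, the four derivations in $D_e$ passing the edge letter $f$ with no net sign, whereas $d_ud_c$ contributes $f\wedge v_e\wedge D_e\ori=-\,v_e\wedge f\wedge D_e\ori$. In both parities the two contributions cancel, which gives the relation.

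The only real obstacle I anticipate is the sign bookkeeping in the odd case $n=1$: one must keep straight that contracting $e$ inserts a fresh vertex letter $v_e$ into the orientation while unmarking $f$ inserts the edge letter $f$, that these are genuinely distinct new symbols even when $e$ and $f$ share a vertex, and that $D_e$ is insensitive both to $f$ and to the letters attached to $f$ when $e\neq f$. Once a fixed Koszul-sign convention for the partial derivatives on orientation monomials is pinned down, every sign above is forced and nothing further remains.
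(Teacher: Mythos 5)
Your proof is correct. The paper itself offers no proof of this lemma---it is explicitly left as an exercise with a pointer to the reference cited there---so there is no in-paper argument to compare against; your treatment is the standard one and all the delicate points check out, in particular that two forest edges can never form a $2$-cycle (so iterated contraction is always defined and the two orders give the same graph), that unmarking and contraction commute as operations on the underlying forested graph, and that in the odd case the four odd derivations in $D_e$ pass the freshly inserted edge letter $f$ with net Koszul sign $(-1)^4=+1$ while the prepended letters $f$ and $v_e$ anticommute, which is exactly what produces the cancellation in $d_cd_u+d_ud_c$.
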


The primary differential on the complex $\FG_n(r)$ we consider is $d=d_c+d_u$. With this differential the graph complex computes the homology of a family of groups $\Gamma_{g,r}$ (see \cite{HatcherVogtmann2004}) that generalize the outer automorphism groups of free groups $\Out(F_g)=\Gamma_{g,0}$ and the automorphism groups of free groups $\Aut(F_g)=\Gamma_{g,1}$. More precisely, one has the following result:

\begin{thm}[after Conant, Kassabov, Hatcher, Vogtmann]
Let $g,r \geq 0$ be such that $2g+r\geq 3$. Then we have that 
\begin{align}
\label{equ:FG0 out}
H_k(\FG_0^{g\text{-loop}}(r),d_c+d_u) &\cong H_k(\Gamma_{g,r};\Q)
\\
\label{equ:FG1 out}
H_k(\FG_1^{g\text{-loop}},d_c+d_u) &\cong H_{k}(\Gamma_{g,r};\sgn),
\end{align}
with $\Q$ the trivial representation of $\Gamma_{g,r}$, and $\sgn$ the sign representation.
\end{thm}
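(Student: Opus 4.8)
The plan is to deduce this from the standard homological model for $\Out(F_g)$, $\Aut(F_g)$ and their generalizations, namely a version of the spine of Outer space (with marked leaves) as developed by Culler--Vogtmann, Hatcher, and Hatcher--Vogtmann \cite{HatcherVogtmann2004}, combined with the identification of its cellular chains with the forested graph complex as in \cite{Clivio} and the work of Conant--Vogtmann. Concretely, for $2g+r\geq 3$ one has a contractible cube complex $K_{g,r}$ on which $\Gamma_{g,r}$ acts with finite stabilizers. Its $m$-cubes are indexed by pairs $(\gamma,T)$ with $\gamma$ a marked admissible forested graph of loop order $g$ with $r$ hairs and $T\subset E\gamma$ a subforest of $m$ edges (none incident to a hair, none a tadpole), the cube being the product of one length-parameter interval per edge of $T$. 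For $e\in T$, letting the length of $e$ tend to $0$ produces the face $(\gamma/e,T\setminus\{e\})$, while freezing its length at the opposite end produces $(\gamma,T\setminus\{e\})$ with $e$ now unmarked. Hence on the quotient $K_{g,r}/\Gamma_{g,r}$, whose cells are isomorphism classes of such pairs, the induced cellular boundary is exactly the sum of ``collapse a marked edge'' and ``unmark a marked edge'', i.e.\ $d_c+d_u$.

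Next I would use that, since $K_{g,r}$ is contractible and the cell stabilizers are finite, over $\Q$ the $\Gamma_{g,r}$-equivariant cellular chain complex of $K_{g,r}$ computes $H_*(\Gamma_{g,r};-)$ with coefficients in whatever orientation/twist module one attaches to cells; in each degree this equivariant chain group is the direct sum over isomorphism classes of $m$-cells of the coinvariants of the cell's orientation line, which is precisely the recipe ``$R$-linear combinations of oriented graphs modulo $(\gamma,-\ori)=-(\gamma,\ori)$''. Orienting a cube of $K_{g,r}$ amounts to ordering the $m$ marked edges of its forest up to even permutation, so for $n=0$ this equivariant complex is literally $\FG_0^{g\lp}(r)$ with trivial coefficients, yielding \eqref{equ:FG0 out}. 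For $n=1$ the convention instead orders the internal vertices, half-edges, and unmarked edges; the one-dimensional $\Gamma_{g,r}$-representation by which this convention differs from the cube-orientation convention is the sign representation $\sgn$ (the sign of the induced action on $H_1$ of the underlying graph), and the same argument then gives \eqref{equ:FG1 out}. In both cases the loop order $g$ is the first Betti number of the underlying graph, i.e.\ the rank of the ambient free group, so the gradings agree.

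I expect the main obstacle to be purely a matter of matching conventions, the conceptual content being entirely contained in the cited literature. The delicate point is the sign bookkeeping in the second step, and above all verifying that the discrepancy between the two orientation conventions is \emph{globally} the sign representation of $\Gamma_{g,r}$ rather than some sign that merely depends on the graph; this requires carefully tracking how a graph automorphism acts on a half-edge/vertex/edge ordering against how it acts on an orientation of the corresponding cube of $K_{g,r}$. One should also treat the boundary cases where the hypothesis $2g+r\geq 3$ is sharp (such as $\Gamma_{1,1}$ and $\Gamma_{0,3}$), and confirm that graphs with tadpoles and multiple edges — which are allowed in $\FG_n(r)$ and do occur as cells of $K_{g,r}$ — cause no trouble, the conditions that $T$ contain no edge at a hair and no tadpole being exactly the requirement that a forest collapse be an admissible move in Outer space.
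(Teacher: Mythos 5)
Your proposal follows essentially the same route as the paper's own proof sketch: both identify $\FG_n^{g\lp}(r)$ with the ($\Q$-coefficient, possibly twisted) cellular chains of the quotient of the contractible Hatcher--Vogtmann complex $A_{g,r}$ by $\Gamma_{g,r}$, with $d_c+d_u$ as the cellular boundary, and both reduce the odd case to checking that the discrepancy between the two orientation conventions is the local system $\Det(H_1(\gamma))$, i.e.\ the sign representation. The only cosmetic difference is that the paper delegates the equivariant-homology step to \cite[Proposition 8.7]{ConantKassabovVogtmann2013} while you invoke the standard finite-stabilizer argument directly; the delicate sign-matching point you flag is exactly the step the paper also leaves as an exercise.
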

\begin{proof}[Proof sketch]
Although the result is (mostly) shown in the literature, we do not know a reference in which it appears in the stated form.
First, \eqref{equ:FG0 out} for $r=0$ can be found in \cite{ConantVogtmann2003, Kontsevich1993}.
The general case (i.e., general $r$) of \eqref{equ:FG0 out} is shown in \cite[Theorem 11.1]{ConantKassabovVogtmann2013}.

This latter proof can be generalized to other representations of $\Gamma_{g,r}$ to obtain in particular \eqref{equ:FG1 out}. More precisely, as in said proof, one uses again \cite[Proposition 8.7]{ConantKassabovVogtmann2013} with $X=A_{g,r}$ the spaces of \cite{HatcherVogtmann2004}, but with $M$ the sign representation.
This expresses the right-hand side of \eqref{equ:FG1 out} as the homology of $A_{g,r}/\Gamma_{g,r}$, but with twisted coefficients. The local system that gives rise to the twist is, on the cell of $A_{g,r}$ of a graph $\gamma$, the one-dimensional vector spaces $\Det(H_1(\gamma))=\wedge^g H_1(\gamma)$.

Similarly, the left-hand side of \eqref{equ:FG1 out} is identified with the homology of $A_{g,r}/\Gamma_{g,r}$ with coefficients in the local system $O$ such that 
a generator of $O(\gamma)$ is given by an ordering on the set of half-edges, edges and vertices of $\gamma$, modulo even permutations.
Equivalently, 
$
O(\gamma)
=
\Det(C_1(\gamma))\Det(C_0(\gamma))^{-1},
$
where $C_\bullet(\gamma)$ is the simplicial chain complex of $\gamma$.
Finally, we leave it as an exercise that the local systems $O(-)$ and $\Det(H_1(-))$ are isomorphic.
\end{proof}

There are furthermore known stabilization and vanishing results for the homology of $\Out(F_g)$ due to Hatcher-Vogtmann and Galatius that we summarize as follows.

\begin{thm}[\cite{HatcherVogtmann1998,HatcherVogtmann2004,Galatius2011} ]\label{thm:Out vanishing}
The homology $H_k(\Out(F_g);\Q)$ vanishes if $5k< 4g-5$. 
\end{thm}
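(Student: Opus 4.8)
The plan is to deduce the vanishing from two external inputs, combined in the sharpest possible way: the triviality of the stable rational homology (Galatius) and homological stability (Hatcher--Vogtmann). First I would recall Galatius's theorem \cite{Galatius2011}: the stabilization maps identify $H_*(\Aut(F_\infty);\Z)$ (the colimit of $H_*(\Aut(F_n);\Z)$ over the stabilization maps) with $H_*(\Omega^\infty_0 \mathbb S;\Z)$, and since the sphere spectrum is rationally trivial in positive degrees this yields $H_k(\Aut(F_\infty);\Q)=0$ for $k>0$; moreover the map $B\Aut(F_\infty)\to B\Out(F_\infty)$ is a homology isomorphism, so likewise $H_k(\Out(F_\infty);\Q)=0$ for $k>0$. (If one prefers to work directly with $\Out$, one transfers along the Serre spectral sequence of the extension $1\to F_g\to\Aut(F_g)\to\Out(F_g)\to 1$; since the fibre $\vee^g S^1$ has reduced homology concentrated in degree $1$, this is a two-line spectral sequence relating $H_*(\Aut(F_g);\Q)$ to $H_*(\Out(F_g);\Q)$ and $H_*(\Out(F_g);H_1(F_g;\Q))$.)

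Next I would invoke homological stability. Hatcher--Vogtmann \cite{HatcherVogtmann2004}, using the connectivity estimates for the relevant complexes of graphs and sphere systems proved in \cite{HatcherVogtmann1998}, show that the stabilization map $H_k(\Out(F_g))\to H_k(\Out(F_{g+1}))$ is an isomorphism once $g$ exceeds an explicit linear function of $k$, and that it is already injective (resp.\ surjective) in a strictly wider range. Combining with the previous paragraph: in the isomorphism range $H_k(\Out(F_g);\Q)$ equals its stable value, which is $0$ for $k>0$; and in the wider injectivity range one has a chain of injections $H_k(\Out(F_g);\Q)\hookrightarrow H_k(\Out(F_{g+1});\Q)\hookrightarrow\cdots$ ending in a group already known to vanish, so $H_k(\Out(F_g);\Q)=0$ there as well. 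Thus the final bound is governed by the sharpest \emph{injectivity} range of the stabilization map, and a direct bookkeeping with the connectivity constants of \cite{HatcherVogtmann1998,HatcherVogtmann2004} turns this into the inequality $5k<4g-5$.

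The hard part will be precisely this bookkeeping of constants: one must use the ``injective below / surjective above'' refinement of stability rather than the crude isomorphism range, run the downward induction just sketched, and --- when working with $\Out$ through the two-line spectral sequence --- also control how the twisted group $H_*(\Out(F_g);H_1(F_g;\Q))$ stabilizes, which costs an extra homological degree and is what ultimately produces the non-obvious slope $4/5$ rather than the naive slope $1/2$. Finally one disposes of the finitely many small-$g$ cases by hand, e.g.\ using $\Out(F_2)\cong GL_2(\Z)$, which is virtually free and hence has vanishing rational homology in positive degrees. Everything else (the spectral sequence comparison, the induction along injections) is formal; the only real content is extracting the optimal stability ranges from \cite{HatcherVogtmann1998,HatcherVogtmann2004} and assembling them with \cite{Galatius2011}.
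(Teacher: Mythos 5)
The paper does not prove this statement at all -- it is quoted verbatim from the literature, with the citations indicating exactly the argument you outline (Galatius's stable vanishing fed through Hatcher--Vogtmann homological stability). So in outline your strategy is the intended one. The problem is that the entire mathematical content of the statement is the precise constant, and your proposal never produces it: everything is deferred to ``bookkeeping with the connectivity constants,'' together with a heuristic for why that bookkeeping should yield slope $4/5$ that does not hold up. The stability ranges one usually extracts from these sources are of the form $g \geq 2k + \mathrm{const}$ (both for the stabilization $\Aut(F_g)\to\Aut(F_{g+1})$ and for the comparison $\Aut(F_g)\to\Out(F_g)$), and combining those with Galatius gives vanishing only for roughly $2k < g$, which is strictly weaker than $5k < 4g-5$, i.e.\ $g > \tfrac{5(k+1)}{4}$. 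Your proposed mechanism for upgrading the ``naive slope $1/2$'' to $4/5$ -- controlling the twisted summand $H_*(\Out(F_g);H_1(F_g;\Q))$ in the two-line spectral sequence of $1\to F_g\to \Aut(F_g)\to\Out(F_g)\to 1$ -- cannot do this: handling twisted coefficients only consumes stability range, and an additive degree shift cannot change a multiplicative slope. The bound with slope $5/4$ is a specific, sharper stability statement that must be quoted as such from Hatcher--Vogtmann; it is not a formal consequence of a slope-$2$ range plus injectivity tricks.

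There is also a concrete obstruction to the particular route you describe. Since $\tfrac{5(k+1)}{4} < 2k+4$ for every $k\geq 0$, the claimed vanishing range contains values of $g$ (e.g.\ $k=4$, $g=7$, where $2k+4=12$) that lie strictly below the range in which $H_k(\Aut(F_g))\to H_k(\Out(F_g))$ is known to be an isomorphism. So deducing the result ``through $\Aut$'' -- first killing $H_k(\Aut(F_g);\Q)$ stably and then transferring to $\Out$ -- cannot cover the whole asserted range; one needs the stability/injectivity statement for the groups $\Out(F_g)$ themselves, with its own explicit constants, and then the downward induction along injections that you sketch. The induction and the small-$g$ cases (e.g.\ $\Out(F_2)\cong GL_2(\Z)$) are indeed formal, but as written the proposal proves only the weaker bound $2k \lesssim g$ and asserts, rather than establishes, the inequality $5k<4g-5$.
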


The graph complexes $\FG_n(r)$ are relatively large.
Fortunately however, they contain slightly smaller quasi-isomorphic subcomplexes, that we shall describe in the following sections.

\subsubsection{Bridgeless version}
A bridge in a connected graph $\gamma$ is an edge $e$ such that the graph $\gamma\setminus e$ obtained by removing $e$ is disconnected.
The graph $\Gamma$ is called bridgeless if none of its edges is a bridge.
Note that by convention we do not count external legs as edges, and hence there can be bridgeless graphs with external legs.
We denote by 
\[
\FG_n^{bl} \subset \FG_n
\]
the subcomplex spanned by bridgeless forested graphs.
\begin{prop}\label{prop:FGC bl1}
The inclusion $\FGC_n^{bl,g\lp} \subset \FGC_n^{g\lp}$ is a quasi-isomorphism for all $n$ and all $g\geq 1$.
\end{prop}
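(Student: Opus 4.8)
The plan is to exhibit the quotient complex $Q_n^{g\lp} := \FGC_n^{g\lp} / \FGC_n^{bl,g\lp}$ as acyclic, which by the long exact sequence in homology immediately gives the claim. The complex $Q_n^{g\lp}$ is spanned by (oriented) forested graphs that contain at least one bridge, with the differential induced by $d=d_c+d_u$; note that neither $d_c$ nor $d_u$ can ever make a graph bridgeless (contracting or unmarking an edge cannot turn a bridge into a non-bridge, and removing/contracting edges preserves the property of possessing a bridge since the bridge in question, if marked and contracted, either survives as a bridge or the graph loses a loop — one must check this does not happen, i.e. a bridge is never a marked forest edge whose contraction is forced; in fact a bridge in a loop-order $g\geq 1$ graph is never a loop edge and contracting it keeps the loop order, so bridges persist), so $Q_n^{g\lp}$ is a genuine complex.

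First I would set up a spanning-tree / bridge decomposition: any connected graph with $g\geq 1$ loops and at least one bridge decomposes, along its maximal set of bridges, into a tree of "blocks", where each block is either a single bridge edge or a maximal bridgeless subgraph ("$2$-edge-connected component"); since $g\geq 1$, at least one block carries positive loop order. The key structural observation is that a bridge $e$, being a cut edge, cannot lie on any cycle, hence can never be a forest edge whose contraction changes the loop order, and more importantly the hairs are distributed among the blocks. I would then fix a canonical choice — for instance, pick the bridge $e_0$ that is "first" in some fixed total order on isomorphism classes of such data, or better, use the standard trick: choose a distinguished bridge by a combinatorial rule invariant under isomorphism (e.g. the bridge adjacent to the lowest-numbered hair on the path toward the positive-loop-order part, or if there are no hairs, the unique bridge determined by a fixed ordering of half-edges).

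The main step is then a \emph{Koszul-type / acyclic matching} argument on $Q_n^{g\lp}$. Using the distinguished bridge $e_0$, I would split the generators into two classes: those in which $e_0$ is marked (lies in the forest $T$) and those in which it is not, and define a contracting homotopy $h$ that toggles the marking of $e_0$ — i.e. $h$ sends an unmarked-$e_0$ graph to the same graph with $e_0$ marked, with an appropriate sign, and is zero on marked-$e_0$ graphs. One checks $h d + d h = \pm \id$ on $Q_n^{g\lp}$, using that the $d_u$-term unmarking $e_0$ and the $h$-operation remarking it cancel against each other, while all other edges' contributions commute with $h$; the crucial point is that $e_0$ being a bridge is preserved by every graph appearing, so the distinguished-bridge assignment is compatible with $d$. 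This is essentially the same mechanism by which the forested graph complex is known to be contractible on the "tree part", and the bookkeeping of signs (and the odd-$n$ orientation conventions involving $e\wedge\ori$ versus $\partial_e\ori$) is the only genuinely delicate part.

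The hard part will be twofold: (i) verifying that the "distinguished bridge" can be chosen in an isomorphism-invariant, differential-compatible way — in particular that whenever a graph with a bridge maps under $d$ to another graph with a bridge, the two distinguished choices line up so that $h$ is well-defined — and (ii) controlling the signs/orientations, especially in the $n$ odd case where unmarking multiplies the orientation by $e\wedge(-)$, so that the homotopy identity comes out with a nonzero scalar rather than something that cancels. An alternative, possibly cleaner, route would be to filter $Q_n^{g\lp}$ by the number of bridges and run a spectral sequence whose first page is computed by a local homotopy near a single fixed bridge; but I expect the direct homotopy on $d_u$ (toggling the marking of a canonically chosen bridge) to be the most transparent, since the $d_u$ part of the differential already has the shape of a Koszul differential on the exterior algebra of forest edges.
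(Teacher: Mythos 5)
Your overall strategy---show that the quotient by the bridgeless subcomplex is acyclic via a homotopy that toggles the marking of a bridge---is close in spirit to the paper's argument, but as written it has two genuine gaps. The first is the ``distinguished bridge'': a single bridge $e_0$ cannot in general be chosen in an isomorphism-invariant way, because a graph may have an automorphism permuting its bridges (e.g.\ two identical bridgeless blobs attached by two bridges to the two ends of a middle blob, with the flip swapping the bridges; or three identical blobs attached by bridges to a central trivalent vertex). No combinatorial rule singles out one bridge of such a graph, so an $h$ defined via such a choice is not well defined on the complex. The paper's fix is to sum over \emph{all} bridges, $h\Gamma=\sum_{e\in B\Gamma}\Gamma\text{ mark }e$, which yields $hd+dh=b(\Gamma)\cdot\mathrm{id}$ with $b(\Gamma)$ the number of bridges; this is invertible on the bridged part since $b(\Gamma)\geq 1$ and one works over $\Q$.

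The second gap is the homotopy identity itself: with the full differential $d=d_c+d_u$ the relation $hd+dh=\pm\,\mathrm{id}$ fails, because after $h$ marks $e_0$ the contraction differential produces the term contracting $e_0$, giving an unmatched summand $\pm\,\Gamma/e_0$. This term need not vanish in the quotient, and---contrary to your parenthetical claim that the differential preserves the property of having a bridge---contracting a marked bridge \emph{can} produce a bridgeless graph (two theta graphs joined by a single marked bridge, say). The paper disposes of these terms before running the homotopy: it filters by $m(\Gamma)+b(\Gamma)$ (marked non-bridge edges plus bridges), a quantity strictly decreased by every edge contraction and by every unmarking of a non-bridge, so that on the associated graded only the part $d_u'$ of $d_u$ that unmarks bridges survives, and there the homotopy identity holds exactly. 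Your suggested alternative of filtering by the number of bridges is the right kind of repair for this second issue, but it still requires the sum-over-all-bridges form of $h$ to resolve the first.
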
 
\begin{proof}
  For $\gamma$ a forested graph we denote by $m(\gamma)$ the number of marked non-bridge edges, and by $b(\Gamma)$ the number of (marked or unmarked) bridges in $\Gamma$.

We endow $\FG_n$ with a filtration 
\[
0=\mF^{-1} \FG_n \subset \mF^0 \FG_n \subset \cdots 
\]
such that $\mF^p \FG_n$ is spanned by graphs $\Gamma$ such that 
\[
  m(\Gamma) + b(\Gamma) \leq p.
\]
Note that the contraction of a marked edge always reduces the number on the left by at least one. 
The unmarking of an edge reduces the number by one if the edge is not a bridge, and leaves the number the same if it is.
Let us denote the summand of $d_u$ that acts on bridge edges only by $d_u'$.
Then the associated graded of $\FG_n$ with respect to the above filtration is identified with $(\FG_n,d_u')$.

We may restrict the filtration on $\FG_n^{bl}$ as well.
It is then sufficient to check that the inclusion of associated graded complexes 
\[
(\FG_n^{bl},0) = \gr \FG_n^{bl} \subset 
(\FG_n,d_u') = \gr \FG_n
\]
is a quasi-isomorphism. 
But define on $(\FG_n,d_u')$ a homotopy $h$ such 
\[
h\Gamma = \sum_{e\in B\Gamma} \Gamma \text{ mark } e,  
\]
with $B\Gamma$ the set of bridges.
Then 
\[
(d_u'h+hd_u') \Gamma = b(\Gamma) \Gamma.
\]
Hence the homology is indeed identified with the bridgeless part $\FG_n^{bl}$.
\end{proof}

\subsubsection{Homology of $d_c$}
A second simplification of the complexes $\FG_n$ is possible due to the homology of the differential $d_c$ being concentrated in ``top'' (in a suitable sense) degree, corresponding to trivalent graphs.
This is known for (a version of) the standard forested graph complex $\FG_0(0)$ due to work of Kontsevich and Conant-Vogtmann. We need to slightly generalize the result.

For a forested graph $\Gamma$ let us define the excess of $\Gamma$ as 
  \[
  exc(\Gamma) = \sum_{v\in V\Gamma} (\text{valence}(v)-3) \geq 0.
  \]
  For example, graphs of excess zero are the trivalent graphs.
  Let $\FGC_n^{bl,e-exc}\subset \FGC_n^{bl}$ be the graded subspace spanned by graphs of excess $e$. 

  \begin{prop}
    \label{prop:FGC dc bottom}
    \begin{itemize}
      \item 
    For each $n$ and $g\geq 0$ the homology of $\FG_{n}^{g\lp}$ with respect to the edge contraction differential $d_c$ is concentrated in excess zero, corresponding to graphs all of whose vertices are trivalent.
    \item 
    For each $n$ and $g\geq 1$ the homology of $\FG_{n}^{bl,g\lp}$ with respect to the edge contraction differential $d_c$ is concentrated in excess zero, corresponding to graphs all of whose vertices are trivalent.
    \end{itemize}
  \end{prop}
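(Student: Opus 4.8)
The plan is to upgrade the argument of Kontsevich and Conant--Vogtmann for $\FG_0(0)$ (see \cite{Kontsevich1993, ConantVogtmann2003}) to arbitrary $n\in\{0,1\}$ and $r$, and then to the bridgeless variant; the first two generalizations are formal, the last is the real content.

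Since $d_c$ only contracts subforests of the marked forest, the isomorphism class $[G]$ of the graph $G=\gamma/T$ obtained by contracting the \emph{entire} forest is a $d_c$-invariant. Thus $(\FG_n^{g\lp}(r),d_c)=\bigoplus_{[G]}C_{[G]}$, where $G$ runs over isomorphism classes of admissible forested graphs of loop order $g$ with $r$ hairs (\emph{not} required trivalent) and $C_{[G]}$ is spanned by the $(\gamma,T,\ori)$ with $\gamma/T\cong G$. The number of marked edges of such a graph equals $v(\gamma)-v(G)=exc(G)-exc(\gamma)\le exc(G)$, with equality precisely when $\gamma$ is trivalent, so the claimed concentration in excess zero is equivalent to saying that each $C_{[G]}$ has homology concentrated in its top degree $exc(G)$.

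A graph $\gamma$ with $\gamma/T\cong G$ amounts to a choice, for each internal vertex $w\in VG$ of valence $k_w$, of an at-least-trivalent tree $T_w$ with $k_w$ labelled legs (its half-edges), with $T=\bigsqcup_w E(T_w)$; the differential $d_c$ contracts one internal tree edge. Bookkeeping orientations and the $\Aut(G)$-action gives, over $\Q$, $H_*(C_{[G]})\cong\big(\bigotimes_{w\in VG}H_*(B_{k_w})\big)_{\Aut(G)}\otimes(\text{orientation line of }G)$, where $B_k$ is the complex of suitably oriented $\ge 3$-valent trees with $k$ labelled legs, graded by number of internal edges, with the contraction differential; its top degree $k-2$ is spanned by trivalent trees. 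The local input is classical: $B_k$ is, up to a degree shift, the augmented simplicial chain complex of the space of trees with $k$ leaves, which is homotopy equivalent to a wedge of $(k-3)$-spheres (see the work of Vogtmann and of Robinson--Whitehouse), so $H_*(B_k,d_c)$ sits in top degree. Since a Künneth tensor product over $\Q$ of top-concentrated complexes is again top-concentrated and $\Aut(G)$-coinvariants is exact over $\Q$, part (1) follows. Here $n$ only changes orientation conventions and $r$ only contributes univalent legs, which are never expanded.

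For the bridgeless case the same decomposition restricts to $\FG_n^{bl}$, but the tensor factors shrink: expanding a vertex $w$ of a bridgeless $G$ can create bridges. Concretely an internal edge $e'$ of $T_w$ is a bridge of $\gamma$ iff the bipartition of the legs of $T_w$ induced by $e'$ coarsens the partition of the half-edges of $w$ according to which component of $G\setminus w$ they run into -- a condition depending only on $(G,w)$. So $C_{[G]}^{bl}=\big(\bigotimes_w B_{k_w}^{\mathrm{adm}}\big)_{\Aut(G)}\otimes(\cdots)$, with $B_k^{\mathrm{adm}}\subset B_k$ the subcomplex of trees none of whose internal splits is of this ``separating'' type (a subcomplex, as contracting an edge only deletes splits). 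Bridgelessness forces every block of half-edges of $w$ entering a fixed component of $G\setminus w$ to have size $\ge2$, and from this one checks an admissible trivalent blow-up of $w$ still exists, so the top degree of $B_k^{\mathrm{adm}}$ is again $k-2$. It then remains -- and this I expect to be the main obstacle -- to prove the relative local lemma: $H_*(B_k^{\mathrm{adm}},d_c)$ is concentrated in top degree for every such family of forbidden ``separating'' splits. I would establish this by building an explicit acyclic discrete Morse matching: order $VG$, and on the first vertex whose tree factor is not yet reduced run a relative Morse matching on $B_{k_w}^{\mathrm{adm}}$ pairing an expansion with a contraction along a canonical admissible pair of half-edges (such a pair always exists when $w$ is non-trivalent, since of the $\binom{k_w}{2}$ pairs only blocks of size exactly $2$ are forbidden). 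As $G=\gamma/T$ is a $d_c$-invariant the rule is consistent along $d_c$-orbits, it is local and uses only admissible splits so it is compatible with restriction to $\FG_n^{bl}$, and its critical cells are exactly the trivalent graphs -- giving (1) and (2) simultaneously. (A reduction of (2) to (1) via the long exact sequence of $0\to\FG_n^{bl}\to\FG_n\to\FG_n/\FG_n^{bl}\to 0$ looks no easier, the quotient complex not being obviously better behaved.)
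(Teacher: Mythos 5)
Your reduction is exactly the paper's: decompose $(\FG_n^{g\lp}(r),d_c)$ over the isomorphism class of the fully contracted ``meta''-graph $G=\gamma/T$, identify each summand with $\Aut(G)$-coinvariants of a tensor product over the vertices $w$ of $G$ of tree complexes, observe that for part (1) the local factor is the cyclic bar construction of $\Com$ (Koszulness / the wedge-of-spheres theorem for spaces of trees), and for part (2) restrict to the subcomplex of trees avoiding the ``separating'' splits determined by the partition of the half-edges at $w$ into the components of $G\setminus w$. This is precisely the complex $V_{r,\underline p}$ of $\underline p$-bridgeless trees introduced in the paper's appendix, and you have correctly isolated the key local lemma: its homology is concentrated in the trivalent degree.

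The gap is that this local lemma is the entire content of the statement, and your discrete Morse matching is not a proof of it. You never actually define the matching: a cell of $B_{k_w}^{\mathrm{adm}}$ is a tree, and ``split off the canonical admissible pair of half-edges'' is ambiguous when the two chosen legs are attached to different internal vertices of the tree, or when several expansions/contractions involve them; without a precise rule one cannot check that the pairing is a perfect matching on the non-critical cells, and the acyclicity of the matching (the actual crux of any discrete Morse argument) is not addressed at all by ``the rule is consistent along $d_c$-orbits.'' Two smaller inaccuracies feed into this: the forbidden pairs are not only the blocks of size exactly $2$ but also any pair of two singleton blocks (their color set is then disjoint from that of the complement), and the claim that bridgelessness forces every block to have size $\geq 2$ is false in the presence of hairs, since external legs do not count as edges for the bridge condition and each hair half-edge is its own singleton block. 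What is true, and what the paper verifies using $g\geq 1$, is only that \emph{at least one} block at each vertex has size $\geq 2$. The paper then proves the local lemma by induction on the number of leaves: picking two leaves of the same color, it filters by the number of edges off the backbone joining them, notes that backbone edges are never $\underline p$-bridges, identifies the associated graded with (a summand of) the bar complex of a free commutative algebra -- whose homology is cofree with trivalent backbone vertices and antisymmetrized ribs -- and then applies the induction hypothesis to the ribs, which are smaller instances of the same complex. Some such argument (or a genuinely worked-out shelling/matching) is needed to close your sketch.
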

  The first statement follows from the Koszulness of the cyclic commutative operad and was already used in \cite{Ohashi, Bartholdi}.
  We shall give a proof of the second statement in Appendix \ref{app:FGC dc bottom proof} below.

  \begin{cor}\label{cor:Kn}
    Let $K_n\subset \FGC_n^{bl,0\exc}$ be the kernel of the contraction differential 
    \[
    d_c : \FGC_n^{bl,0\exc}  \to \FGC_n^{bl,1\exc}. 
    \]
    Then the differential $d_u$ restricts to $K_n$ and for every $g\geq 1$ we have that the inclusion 
    \[
      (K_n^{g\lp}, d_u) \subset (\FGC_n^{bl,g\lp}, d_c+d_u)
    \]
    is a quasi-isomorphism.
  \end{cor}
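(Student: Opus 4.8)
The plan is to deduce Corollary~\ref{cor:Kn} from Proposition~\ref{prop:FGC dc bottom} by a standard spectral sequence argument, using the two anticommuting differentials $d_c$ and $d_u$ on the double complex $\FGC_n^{bl,g\lp}$. The key point is that $d_c$ raises excess by exactly one (it contracts a marked edge, merging two vertices, and a contraction term is nonzero only in excess one higher — wait, more carefully: contracting an edge between a $a$-valent and $b$-valent vertex produces an $(a+b-2)$-valent vertex, so the excess goes from $(a-3)+(b-3)+\cdots$ to $(a+b-5)+\cdots$, i.e.\ increases by one), while $d_u$ preserves excess (it merely forgets that an edge is marked, changing no valences). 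Hence excess is a grading making $\FGC_n^{bl,g\lp}$ into a double complex with horizontal differential $d_c$ and vertical differential $d_u$.

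First I would set up the spectral sequence of this double complex obtained by filtering by excess, so that the first page is the $d_c$-homology. By the second bullet of Proposition~\ref{prop:FGC dc bottom}, for $g\geq 1$ the $d_c$-homology of $\FGC_n^{bl,g\lp}$ is concentrated in excess zero; in excess zero the complex is the two-term complex $\FGC_n^{bl,0\exc}\xrightarrow{d_c}\FGC_n^{bl,1\exc}$ (there are no graphs of negative excess), so the $d_c$-homology there is exactly $K_n = \ker\bigl(d_c\colon \FGC_n^{bl,0\exc}\to\FGC_n^{bl,1\exc}\bigr)$. Therefore the first page of the spectral sequence is concentrated in a single excess-degree (excess zero), with entry $K_n^{g\lp}$ equipped with the induced differential, which is visibly $d_u$ (restricted to $K_n$, which makes sense precisely because $d_u$ preserves excess and, by $d_cd_u+d_ud_c=0$, preserves $\ker d_c$). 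A spectral sequence concentrated on a single line degenerates at that page, so it converges to $H(K_n^{g\lp},d_u)$, and on the other hand it converges to $H(\FGC_n^{bl,g\lp},d_c+d_u)$. One then checks that the resulting isomorphism is induced by the evident inclusion $(K_n^{g\lp},d_u)\hookrightarrow(\FGC_n^{bl,g\lp},d_c+d_u)$, which is immediate since a $d_u$-cocycle in $K_n$ is automatically a $(d_c+d_u)$-cocycle and the comparison of filtrations is compatible with it.

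One subtlety to address is boundedness/convergence of the spectral sequence: the excess filtration is bounded below (excess $\geq 0$) but not obviously bounded above on each homological degree. However, on a fixed loop order $g$ and fixed homological degree (number of marked edges $m$), the number of vertices is bounded — for $g\geq 1$ a graph with $v$ vertices, all at least trivalent, and loop order $g$ has a bounded number of edges, hence bounded excess $\sum_v(\mathrm{val}(v)-3) = 2\#E - 3v$ controlled by $g$ and $v$; and $v$ is bounded in terms of $m$ and $g$ since the marked edges form a forest. So in each bidegree the filtration is finite and the spectral sequence converges. This is the only place where real care is needed; the rest is formal homological algebra.

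The main obstacle I anticipate is not conceptual but bookkeeping: making sure that $d_c$ really does strictly increase excess by exactly one (so the filtration is by a genuine $\Z$-grading and the $E_1$ page is the $d_c$-homology rather than something coarser), and that the identification of the $E_1$ differential with $d_u$ on $K_n$ is literally correct with signs. Once Proposition~\ref{prop:FGC dc bottom}(2) is granted, everything else is the degeneration of a one-row spectral sequence, so no further input is required.
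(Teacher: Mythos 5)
Your argument is correct and is precisely the standard derivation the paper intends: the corollary is stated without proof as an immediate consequence of Proposition \ref{prop:FGC dc bottom}(2) via the spectral sequence of the $(d_c,d_u)$-bicomplex, and your convergence check is the right one (trivalence of internal vertices bounds the vertex number by $2g-2+h$ in loop order $g$ with $h$ hairs, so the excess is bounded and the filtration is finite in each degree). One bookkeeping correction: the filtration literally ``by excess'' has $d_u$, not $d_c$, as its associated-graded differential (since $d_u$ preserves excess while $d_c$ raises it), so to make $E^1$ the $d_c$-homology you should filter instead by the complementary quantity, e.g.\ by $m+e$ (marked edges plus excess), which $d_c$ preserves and $d_u$ strictly lowers --- after which everything you write goes through verbatim.
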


\subsection{Graph complexes in general, from a computational viewpoint}
Generally, a graph complex is a differential graded vector space with a basis indexed by isomorphism classes of some sort of combinatorial graphs, with a combinatorially defined differential.
From a computational standpoint, the computation of graph homology has the following characteristic features.
\begin{enumerate}
\item The computation of a basis of the graph complex is a graph enumeration problem.
\item Both the computation of the basis and of the differential involve keeping track of graph isomorphisms.
\item The matrix of the differential is very sparse.
 For example, the matrix of the edge contraction differential has at most as many non-zero entries per column as the maximum number of edges in the graphs considered. 
\end{enumerate}
We also note that for numeric computations we have to restrict to graph complexes that are degree-wise finite dimensional.
For all the complexes described above, this is true if one restricts to the part of fixed loop order.
This is possible since the differentials do not alter the loop order, and hence the graph complexes split into a direct sum of subcomplexes.

\section{The \GH library}\label{sec:GH}

Implementing a graph complex on the computer can be a laborious and error-prone task. Much of the effort is however associated to relatively uninteresting and tedious problems, like organizing files and managing long running computations, that have nothing to do with the graph complex at hand. We developed the \GH framework and library to take over these chores. The goal is that the developer can focus on the implementation of the mathematical core content of the graph complex.

\GH is written in python and makes essential use of \textsc{Sage} \cite{sage}, \textsc{Nauty} \cite{nauty} and \textsc{LinBox} \cite{LinBox}.
It contains the following main components.

\subsection{Basis generation and graph isomorphisms}
All graphs used in \GH are undirected simple graphs with colored vertices. They are internally represented in the \textsc{Sage} \texttt{Graph} format for undirected graphs.
If the graph complex the user wants to implement  contains graphs with more structure, as is usually the case, they need to be encoded as colored simple graphs.

\begin{ex}
For example, the forested graphs above with $r$ distinguishable hairs can be encoded as colored simple graphs with vertices of $r+2$ colors as follows.
Color 1 is for the internal vertices, with the marked edges encoded as edges between these vertices. A non-marked edge is encoded by a bivalent vertex of color 2 and its two adjacent edges. Each external leg is represented by a univalent vertex of a unique color $3,\dots,r+2$.
  \[
\begin{tikzpicture}
  \node[int] (v1) at (0,0) {};
  \node[int] (v2) at (1,0) {};
  \node[int] (v3) at (1,1) {};
  \node[int] (v4) at (0,1) {};
  \node[] (e1) at (0,-1) {$ 1$};
  \node[] (e2) at (1,-1) {$ 2$};
  \draw (v1) edge (e1) edge[fatedge] (v2) edge (v3) edge (v4) 
  (v2) edge (e2) edge (v3) edge[fatedge] (v4) 
  (v3) edge (v4);
\end{tikzpicture}
\to
\begin{tikzpicture}
  \node[int] (v1) at (0,0) {};
  \node[int] (v2) at (1,0) {};
  \node[int] (v3) at (1,1) {};
  \node[int] (v4) at (0,1) {};
  \node[int,fill=red] (e1) at (0,-1) {};
  \node[int,fill=green] (e2) at (1,-1) {};
  \node[int, fill=blue] (w1) at (0,0.5) {};
  \node[int, fill=blue] (w2) at (0.5,0.5) {};
  \node[int, fill=blue] (w3) at (0.5,1) {};
  \node[int, fill=blue] (w4) at (1,0.5) {};
  \draw (v1) edge (e1) edge (v2) 
  (w2) edge (v1) edge (v3) 
   (w1) edge (v4) edge (v1) 
  (v2) edge (e2)  edge[bend left] (v4) 
  (w3) edge (v3) edge (v4)
  (w4) edge (v3)edge (v2);
\end{tikzpicture}
\]
\end{ex}

The graph vector space is decomposed into finite dimensional subspaces, characterised by a specific combination of parameters like a fixed number of vertices and loops and is represented by the abstract class \texttt{GraphVectorSpace}.
The user of \GH implements this class and mainly provides two functions:
First, a method \texttt{perm\_sign()}, which determines the sign by which an isomorphism acts on a graph. And second, a graph generation routine, overriding the method \texttt{get\_generating\_graphs()}, which is supposed to return a generating set of graphs whose isomorphism classes span the graph vector space, not necessarily freely.
The user can use the \textsc{Nauty} graph library \cite{nauty} to list simple or bipartite graphs. A corresponding interface is provided. From the generating set of graphs the method \texttt{build\_basis()} builds a basis consisting of canonically labelled graphs by dropping all graphs with odd automorphisms. Finally the basis is stored as a list of \texttt{graph6} strings in a corresponding basis file. 

\subsection{Linear operators} 
Linear operators acting on graph vector spaces like contracting edges or splitting edges in one or two hairs are represented by the class \texttt{GraphOperator}. Essentially, the user only needs to implement the abstract method \texttt{operate\_on()} that receives a graph of the domain \texttt{GraphVectorSpace} and is supposed to return a linear combination of graphs in the target \texttt{GraphVectorSpace}. The \texttt{GraphOperator} class provides the method \texttt{build\_matrix()} to build the transformation matrix of the operator with respect to the basis of the domain and target graph vector space and stores it in a corresponding matrix file using the \textsc{SMS} format. Furthermore, the method \texttt{compute\_rank()} offers different ways to compute the rank of the operator matrix, which is needed to determine the dimension of the homology. For small matrices \textsc{Sage}'s own method to determine a matrix rank can be used, whereas for large matrices it is adequate to use either the \textsc{LinBox} \cite{LinBox} or the \textsc{Rheinfall} \cite{Rheinfall} library for exact rank computations. 

The class \texttt{Differential} inherits from represents linear operators which are supposed to be a differential. Hence it provides the method \texttt{square\_zero\_test()} which verifies whether the linear operator squares to zero, i.e. is a differential. Furthermore, there are methods for computing and plotting the dimension of the homology of the complex associated with the differential. 

\subsection{Graph complexes}
A graph complex is represented by the class \texttt{GraphComplex}. Fields refer to the underlying vector space as well as a list of linear operators. The method \texttt{test\_anti\_commutativity()} tests whether two linear operators anti-commute and thus whether two differentials build a bicomplex.

\subsection{Bicomplexes}
We also consider the complexes associated with the total differential of bicomplexes. In those cases the vector space is composed of a direct sum of degree slices represented by the class \texttt{DegSlice}, which are themselves direct sums of graph vector spaces representing the subspaces of constant total degree. The total differential of the bicomplex is represented by a collection of operator matrices in the form of the class \texttt{BiOperatorMatrix} whose domain and target vector spaces are degree slices. 
For example, the forested graph complex is encoded as a bicomplex in this manner. 

\subsection{Testing}
Beside the built-in generic tests to verify whether a differential squares to zero and whether two differentials anti-commute, there is the opportunity to compare the generated basis as well as operator matrices with reference data, which is implemented in the classes \texttt{RefGraphVectorSpace} and \texttt{RefOperatorMatrix} respectively.

\section{Experiments and numerical results}
\label{sec:experiments}

\subsection{Foreword on data presentation}
This section contains tables of numerical results of graph homology computations.
In these tables we generally use the following conventions:
\begin{itemize}
\item The rows and columns correspond to the relevant gradings of the graph complex, mostly loop order $l$ and number of vertices $v$. The entry in the $(l,v)$ cell of the table is the computed dimension of the $(l,v)$-part of the homology of the graph complex. Entries "?" mean that we could not compute the homology dimension at that place, mostly due to increasing computational demands.
\item If the graph complex has a natural action of the symmetric group, then we have computed the decomposition of the homology into irreducible representations of $S_n$.
For example, a table entry 
\[
3\, (s_{[3]}+s_{[2,1]})  
\]
means that the total dimension of the respective homology group is 3, and this 3-dimensional space decomposes into a trivial representation of $S_3$ and the 2-dimensional irreducible representation, corresponding to the partition $2+1$. In particular, the above notation does \emph{not} mean that both of these ireducibles occur three times each. 
\item Entries "-" mean that the respective graded piece of the graph complex is trivial, i.e., it contains no graphs and hence the associated homology is trivial. Entries "0" mean that the graph complex is nontrivial, but the homology turned out zero nevertheless. The reader may treat "$0$" and "-" the same, but the distinction indicates where the graph complex is an efficient or not so efficient representation of the underlying homological problem.
\item Where applicable, we have indicated known theoretical vanishing results by shading cells that are known to contain no homology.
\item Some tables also contain reference Euler characteristic data from the literature. This is included both as confirmation of our results, and an indicator of the expected amount of homology in the respective row of the table.
\item Note that our computations were mostly conducted over finite fields $\mathbb F_p$ rather than $\mathbb Q$, see the methods sections below for a discussion of the obtained results.
\end{itemize}

More data and the $\GH$ source code can be found on the project's github page.

\begin{center}
\url{https://github.com/sibrun/GH}
\end{center}

\subsection{Ordinary graphs}

Considering the ordinary graph complex $G_n$ with the contract-edges-differential we found the dimensions of the homology shown in Figure \ref{fig:ordinary}.

We note that a subset of the data has been computed earlier by Bar-Natan and McKay \cite{BarNatanMcKay}.
Furthermore, a subset of the tables has appeared in \cite{KWZ1}, based on previous computations of the second author, mostly using floating point arithmetic.

\begin{figure}[H]
    \centering

\smallskip
odd $n$ 

\begin{tabular}{|g|M|M|M|M|M|M|M|M|M|M|M|M|M|M|M|M|M|M|M|M|}
 \rowcolor{Gray}
\hline
l,v
 & 4 & 5 & 6 & 7 & 8 & 9 & 10 & 11 & 12 & 13 & 14 & 15 & 16 & 17 & 18 & 19 & 20 & 21 & $\chi$ & $\chi_{ref}$\\ 
\hline
3 & 1 & - \cellcolor{LightGray} & - \cellcolor{LightGray} & - \cellcolor{LightGray} & - \cellcolor{LightGray} & - \cellcolor{LightGray} & - \cellcolor{LightGray} & - \cellcolor{LightGray} & - \cellcolor{LightGray} & - \cellcolor{LightGray} & - \cellcolor{LightGray} & - \cellcolor{LightGray} & - \cellcolor{LightGray} & - \cellcolor{LightGray} & - \cellcolor{LightGray} & - \cellcolor{LightGray} & - \cellcolor{LightGray} & - \cellcolor{LightGray} & 1 & 1\\ 
4 & - \cellcolor{LightGray} & 0 & 1 & - \cellcolor{LightGray} & - \cellcolor{LightGray} & - \cellcolor{LightGray} & - \cellcolor{LightGray} & - \cellcolor{LightGray} & - \cellcolor{LightGray} & - \cellcolor{LightGray} & - \cellcolor{LightGray} & - \cellcolor{LightGray} & - \cellcolor{LightGray} & - \cellcolor{LightGray} & - \cellcolor{LightGray} & - \cellcolor{LightGray} & - \cellcolor{LightGray} & - \cellcolor{LightGray} & 1 & 1\\ 
5 & - \cellcolor{LightGray} & 0 \cellcolor{LightGray} & 0 & 0 & 2 & - \cellcolor{LightGray} & - \cellcolor{LightGray} & - \cellcolor{LightGray} & - \cellcolor{LightGray} & - \cellcolor{LightGray} & - \cellcolor{LightGray} & - \cellcolor{LightGray} & - \cellcolor{LightGray} & - \cellcolor{LightGray} & - \cellcolor{LightGray} & - \cellcolor{LightGray} & - \cellcolor{LightGray} & - \cellcolor{LightGray} & 2 & 2\\ 
6 & - \cellcolor{LightGray} & 0 \cellcolor{LightGray} & 0 \cellcolor{LightGray} & 1 & 0 & 0 & 2 & - \cellcolor{LightGray} & - \cellcolor{LightGray} & - \cellcolor{LightGray} & - \cellcolor{LightGray} & - \cellcolor{LightGray} & - \cellcolor{LightGray} & - \cellcolor{LightGray} & - \cellcolor{LightGray} & - \cellcolor{LightGray} & - \cellcolor{LightGray} & - \cellcolor{LightGray} & 1 & 1\\ 
7 & - \cellcolor{LightGray} & - \cellcolor{LightGray} & 0 \cellcolor{LightGray} & 0 \cellcolor{LightGray} & 0 & 1 & 0 & 0 & 3 & - \cellcolor{LightGray} & - \cellcolor{LightGray} & - \cellcolor{LightGray} & - \cellcolor{LightGray} & - \cellcolor{LightGray} & - \cellcolor{LightGray} & - \cellcolor{LightGray} & - \cellcolor{LightGray} & - \cellcolor{LightGray} & 2 & 2\\ 
8 & - \cellcolor{LightGray} & - \cellcolor{LightGray} & 0 \cellcolor{LightGray} & 0 \cellcolor{LightGray} & 0 \cellcolor{LightGray} & 0 & 0 & 2 & 0 & 0 & 4 & - \cellcolor{LightGray} & - \cellcolor{LightGray} & - \cellcolor{LightGray} & - \cellcolor{LightGray} & - \cellcolor{LightGray} & - \cellcolor{LightGray} & - \cellcolor{LightGray} & 2 & 2\\ 
9 & - \cellcolor{LightGray} & - \cellcolor{LightGray} & 0 \cellcolor{LightGray} & 0 \cellcolor{LightGray} & 0 \cellcolor{LightGray} & 0 \cellcolor{LightGray} & 0 & 0 & 0 & 3 & 0 & 0 & 5 & - \cellcolor{LightGray} & - \cellcolor{LightGray} & - \cellcolor{LightGray} & - \cellcolor{LightGray} & - \cellcolor{LightGray} & 2 & 2\\ 
10 & - \cellcolor{LightGray} & - \cellcolor{LightGray} & 0 \cellcolor{LightGray} & 0 \cellcolor{LightGray} & 0 \cellcolor{LightGray} & 0 \cellcolor{LightGray} & 0 \cellcolor{LightGray} & 0 & 0  & 0  & 0  & 5  & 0  & 0 & 6 & - \cellcolor{LightGray} & - \cellcolor{LightGray} & - \cellcolor{LightGray} & 1 & 1\\ 
11 & - \cellcolor{LightGray} & - \cellcolor{LightGray} & - \cellcolor{LightGray} & 0 \cellcolor{LightGray} & 0 \cellcolor{LightGray} & 0 \cellcolor{LightGray} & 0 \cellcolor{LightGray} & ? \cellcolor{LightGray} & ? & ? & ? & ? & ? & ? & ? & ? & 8  & - \cellcolor{LightGray} & ? & 3\\ 
\hline
\end{tabular}

\smallskip
even $n$

\begin{tabular}{|g|M|M|M|M|M|M|M|M|M|M|M|M|M|M|M|M|M|M|M|M|}
 \rowcolor{Gray}
\hline
l,v
 & 4 & 5 & 6 & 7 & 8 & 9 & 10 & 11 & 12 & 13 & 14 & 15 & 16 & 17 & 18 & 19 & 20 & 21 & $\chi$ & $\chi_{ref}$\\ 
\hline
3 & 1 & - \cellcolor{LightGray} & - \cellcolor{LightGray} & - \cellcolor{LightGray} & - \cellcolor{LightGray} & - \cellcolor{LightGray} & - \cellcolor{LightGray} & - \cellcolor{LightGray} & - \cellcolor{LightGray} & - \cellcolor{LightGray} & - \cellcolor{LightGray} & - \cellcolor{LightGray} & - \cellcolor{LightGray} & - \cellcolor{LightGray} & - \cellcolor{LightGray} & - \cellcolor{LightGray} & - \cellcolor{LightGray} & - \cellcolor{LightGray} & 1 & 1\\ 
4 & - \cellcolor{LightGray} & 0 & 0 & - \cellcolor{LightGray} & - \cellcolor{LightGray} & - \cellcolor{LightGray} & - \cellcolor{LightGray} & - \cellcolor{LightGray} & - \cellcolor{LightGray} & - \cellcolor{LightGray} & - \cellcolor{LightGray} & - \cellcolor{LightGray} & - \cellcolor{LightGray} & - \cellcolor{LightGray} & - \cellcolor{LightGray} & - \cellcolor{LightGray} & - \cellcolor{LightGray} & - \cellcolor{LightGray} & 0 & 0\\ 
5 & - \cellcolor{LightGray} & 0 \cellcolor{LightGray} & 1 & 0 & 0 & - \cellcolor{LightGray} & - \cellcolor{LightGray} & - \cellcolor{LightGray} & - \cellcolor{LightGray} & - \cellcolor{LightGray} & - \cellcolor{LightGray} & - \cellcolor{LightGray} & - \cellcolor{LightGray} & - \cellcolor{LightGray} & - \cellcolor{LightGray} & - \cellcolor{LightGray} & - \cellcolor{LightGray} & - \cellcolor{LightGray} & 1 & 1\\ 
6 & - \cellcolor{LightGray} & 0 \cellcolor{LightGray} & 0 \cellcolor{LightGray} & 0 & 0 & 0 & 1 & - \cellcolor{LightGray} & - \cellcolor{LightGray} & - \cellcolor{LightGray} & - \cellcolor{LightGray} & - \cellcolor{LightGray} & - \cellcolor{LightGray} & - \cellcolor{LightGray} & - \cellcolor{LightGray} & - \cellcolor{LightGray} & - \cellcolor{LightGray} & - \cellcolor{LightGray} & 1 & 1\\ 
7 & - \cellcolor{LightGray} & - \cellcolor{LightGray} & 0 \cellcolor{LightGray} & 0 \cellcolor{LightGray} & 1 & 0 & 0 & 0 & 0 & - \cellcolor{LightGray} & - \cellcolor{LightGray} & - \cellcolor{LightGray} & - \cellcolor{LightGray} & - \cellcolor{LightGray} & - \cellcolor{LightGray} & - \cellcolor{LightGray} & - \cellcolor{LightGray} & - \cellcolor{LightGray} & 1 & 1\\ 
8 & - \cellcolor{LightGray} & - \cellcolor{LightGray} & 0 \cellcolor{LightGray} & 0 \cellcolor{LightGray} & 0 \cellcolor{LightGray} & 1 & 0 & 0 & 1 & 0 & 0 & - \cellcolor{LightGray} & - \cellcolor{LightGray} & - \cellcolor{LightGray} & - \cellcolor{LightGray} & - \cellcolor{LightGray} & - \cellcolor{LightGray} & - \cellcolor{LightGray} & 0 & 0\\ 
9 & - \cellcolor{LightGray} & - \cellcolor{LightGray} & 0 \cellcolor{LightGray} & 0 \cellcolor{LightGray} & 0 \cellcolor{LightGray} & 0 \cellcolor{LightGray} & 1 & 0 & 0 & 1 & 0 & 0 & 0 & - \cellcolor{LightGray} & - \cellcolor{LightGray} & - \cellcolor{LightGray} & - \cellcolor{LightGray} & - \cellcolor{LightGray} & 0 & 0\\ 
10 & - \cellcolor{LightGray} & - \cellcolor{LightGray} & 0 \cellcolor{LightGray} & 0 \cellcolor{LightGray} & 0 \cellcolor{LightGray} & 0 \cellcolor{LightGray} & 0 \cellcolor{LightGray} & 1 & 0  & 0  & 2  & 0  & 0  & 0 & 1 & - \cellcolor{LightGray} & - \cellcolor{LightGray} & - \cellcolor{LightGray} & 2 & 2\\ 
11 & - \cellcolor{LightGray} & - \cellcolor{LightGray} & - \cellcolor{LightGray} & 0 \cellcolor{LightGray} & 0 \cellcolor{LightGray} & 0 \cellcolor{LightGray} & 0 \cellcolor{LightGray} & ? \cellcolor{LightGray} & ? & ? & ? & ? & ? & ? & ? & ? & 0  & - \cellcolor{LightGray} & ? & 1\\ 
\hline
\end{tabular}

    \caption{Homology dimensions for the ordinary graph complex $H(\G_n)$. The rows correspond to loop order $l$ and the columns to the number of vertices $v$.
    The shaded cells must be zero by known vanishing theorems. Concretely, the upper bound is due to the trivalence condition on vertices of graphs in the graph complex. The lower bound is due to \cite{Willwacher2014}. We remark that only the case of even $n$ is stated in \cite{Willwacher2014}. However, for odd $n$ the corresponding result can be shown analogously. The reference Euler characteristics have been taken from \cite{WillwacherZivkovic15}. }
    \label{fig:ordinary}
\end{figure}

\subsubsection{Methods}\label{sec:ordinary methods}

We consider the subspaces 
\begin{align*}
V_{g,n,v} \subset \G_n
\end{align*}
spanned by graphs of loop order $g$ with $v$ vertices, and the linear operators
\begin{align*}
  d_{v} := d\mid_{V_{g,n,v}}  : V_{g,n,v} \to V_{g,n,v-1}.
\end{align*}
The dimension of the $g$-loop, $v$-vertex part of the homology is then 
\begin{equation}\label{equ:Dv ord rank}
  D_v := \dim V_{g,n,v} - \rank d_{v} -\rank d_{v+1}.
\end{equation}

We generate a basis of $V_{g,n,v}$ as follows. We first produce a list of all isomorphism classes of connected 1-vertex irreducible $g$-loop graphs with $v$ vertices of valence $\geq 3$.
To this end we use B. McKay's \textsc{Nauty} library \cite{nauty}. We canonically label the graphs and remove all that have odd symmetries, and are hence zero in $\G_n$. This yields a basis of 
$V_{g,n,v}$, and hence its dimension.

We next produce the matrices of the differentials $d_{v}$ in a custom \textsc{Sage} program. These matrices are very sparse, with integer entries.
Finally, we use \textsc{LinBox} \cite{LinBox} to compute the ranks of these matrices, and hence the numbers $D_v$ above.

To avoid memory overflow a part of the rank computations is conducted over a prime (32189). 
Note that the rank modulo a prime of an integer matrix is always less than or equal to the actual (rational) rank of the matrix.
Furthermore, the rank computation algorithm used internally by LinBox provides an approximation to the actual rank, that is only guaranteed to be a lower bound to the actual rank. (That said, it is correct with high probability.)
Overall, this means that we only compute a lower bound (and approximation)
\begin{equation}\label{equ:rv approx}
  r_v \leq \rank d_{v}
\end{equation}
to the desired ranks of $d_v$.
From \eqref{equ:Dv ord rank} we then see that 
\[
  D_v \leq \dim V_{g,n,v} -r_v -r_{v+1}.
\]
Suppose that $v$ is such that the right-hand side is $0$. Then, since $D_v\geq 0$ we deduce $D_v=0$. But from \eqref{equ:Dv ord rank} and \eqref{equ:rv approx} we then obtain that
\begin{align*}
  r_v &= \rank d_{v}
  &
  r_{v+1} &= \rank d_{v+1},
\end{align*}
that is, our approximations to the ranks are exact.
This argument has been used before by Bartholdi \cite{Bartholdi}.

Looking at the tables in Figure \ref{fig:ordinary} we see that no two horizontally consecutive cells are non-zero, except for the (8,20)-entry of the first table. Hence we can deduce that our computation of $D_v$ is actually correct over $\Q$ and rigorous, except for the entry "8" in the even-edge graph complex, with 11 loops and 20 vertices.

\subsection{Testing Merkulov's conjecture}

We checked Merkulov's conjecture, Conjecture \ref{conj:merkulov}, numerically, up to loop order 10.
The results are displayed in Figure \ref{fig:merkulov}.
As one can see comparing to Figure \ref{fig:ordinary}, Merkulov's conjecture appears to be true up to loop order 10.
We shall note, however, that we partially used finite-field arithmetic, see the discussion below.

\begin{figure}[H]
  \centering

\smallskip
odd $n$ 

\begin{tabular}{|g|M|M|M|M|M|M|M|M|M|M|M|M|M|M|M|M|M|M|}
 \rowcolor{Gray}
\hline
l,v
& 4 & 5 & 6 & 7 & 8 & 9 & 10 & 11 & 12 & 13 & 14 & 15 & 16 & 17 & 18 & 19 & 20 & 21\\ 
\hline
3 & 1 & - \cellcolor{LightGray} & - \cellcolor{LightGray} & - \cellcolor{LightGray} & - \cellcolor{LightGray} & - \cellcolor{LightGray} & - \cellcolor{LightGray} & - \cellcolor{LightGray} & - \cellcolor{LightGray} & - \cellcolor{LightGray} & - \cellcolor{LightGray} & - \cellcolor{LightGray} & - \cellcolor{LightGray} & - \cellcolor{LightGray} & - \cellcolor{LightGray} & - \cellcolor{LightGray} & - \cellcolor{LightGray} & - \cellcolor{LightGray}\\ 
4 & 0 \cellcolor{LightGray} & 0 & 1 & - \cellcolor{LightGray} & - \cellcolor{LightGray} & - \cellcolor{LightGray} & - \cellcolor{LightGray} & - \cellcolor{LightGray} & - \cellcolor{LightGray} & - \cellcolor{LightGray} & - \cellcolor{LightGray} & - \cellcolor{LightGray} & - \cellcolor{LightGray} & - \cellcolor{LightGray} & - \cellcolor{LightGray} & - \cellcolor{LightGray} & - \cellcolor{LightGray} & - \cellcolor{LightGray}\\ 
5 & 0 \cellcolor{LightGray} & 0 \cellcolor{LightGray} & 0 & 0 & 2 & - \cellcolor{LightGray} & - \cellcolor{LightGray} & - \cellcolor{LightGray} & - \cellcolor{LightGray} & - \cellcolor{LightGray} & - \cellcolor{LightGray} & - \cellcolor{LightGray} & - \cellcolor{LightGray} & - \cellcolor{LightGray} & - \cellcolor{LightGray} & - \cellcolor{LightGray} & - \cellcolor{LightGray} & - \cellcolor{LightGray}\\ 
6 & - \cellcolor{LightGray} & 0 \cellcolor{LightGray} & 0 \cellcolor{LightGray} & 1 & 0 & 0 & 2 & - \cellcolor{LightGray} & - \cellcolor{LightGray} & - \cellcolor{LightGray} & - \cellcolor{LightGray} & - \cellcolor{LightGray} & - \cellcolor{LightGray} & - \cellcolor{LightGray} & - \cellcolor{LightGray} & - \cellcolor{LightGray} & - \cellcolor{LightGray} & - \cellcolor{LightGray}\\ 
7 & - \cellcolor{LightGray} & - \cellcolor{LightGray} & 0 \cellcolor{LightGray} & 0 \cellcolor{LightGray} & 0 & 1 & 0 & 0 & 3 & - \cellcolor{LightGray} & - \cellcolor{LightGray} & - \cellcolor{LightGray} & - \cellcolor{LightGray} & - \cellcolor{LightGray} & - \cellcolor{LightGray} & - \cellcolor{LightGray} & - \cellcolor{LightGray} & - \cellcolor{LightGray}\\ 
8 & - \cellcolor{LightGray} & - \cellcolor{LightGray} & - \cellcolor{LightGray} & 0 \cellcolor{LightGray} & 0 \cellcolor{LightGray} & 0 & 0 & 2 & 0 & 0 & 4 & - \cellcolor{LightGray} & - \cellcolor{LightGray} & - \cellcolor{LightGray} & - \cellcolor{LightGray} & - \cellcolor{LightGray} & - \cellcolor{LightGray} & - \cellcolor{LightGray}\\ 
9 & - \cellcolor{LightGray} & - \cellcolor{LightGray} & - \cellcolor{LightGray} & - \cellcolor{LightGray} & 0 \cellcolor{LightGray} & 0 \cellcolor{LightGray} & 0 & 0 & 0 & 3 & 0 & 0 & 5 & - \cellcolor{LightGray} & - \cellcolor{LightGray} & - \cellcolor{LightGray} & - \cellcolor{LightGray} & - \cellcolor{LightGray}\\ 
10 & - \cellcolor{LightGray} & - \cellcolor{LightGray} & - \cellcolor{LightGray} & - \cellcolor{LightGray} & - \cellcolor{LightGray} & 0 \cellcolor{LightGray} & 0 \cellcolor{LightGray} & 0 & 0  & 0  & 0  & 5  & 0  & 0 & 6 & - \cellcolor{LightGray} & - \cellcolor{LightGray} & - \cellcolor{LightGray}\\ 
11 & - \cellcolor{LightGray} & - \cellcolor{LightGray} & - \cellcolor{LightGray} & - \cellcolor{LightGray} & - \cellcolor{LightGray} & - \cellcolor{LightGray} & 0  \cellcolor{LightGray} & 0  \cellcolor{LightGray} & 0  & ? & ? & ? & ? & ? & ? & ? & ? & - \cellcolor{LightGray}\\ 
\hline
\end{tabular}

\smallskip
even $n$

\begin{tabular}{|g|M|M|M|M|M|M|M|M|M|M|M|M|M|M|M|M|M|M|}
 \rowcolor{Gray}
\hline
l,v
& 4 & 5 & 6 & 7 & 8 & 9 & 10 & 11 & 12 & 13 & 14 & 15 & 16 & 17 & 18 & 19 & 20 & 21\\ 
\hline
3 & 1 & - \cellcolor{LightGray} & - \cellcolor{LightGray} & - \cellcolor{LightGray} & - \cellcolor{LightGray} & - \cellcolor{LightGray} & - \cellcolor{LightGray} & - \cellcolor{LightGray} & - \cellcolor{LightGray} & - \cellcolor{LightGray} & - \cellcolor{LightGray} & - \cellcolor{LightGray} & - \cellcolor{LightGray} & - \cellcolor{LightGray} & - \cellcolor{LightGray} & - \cellcolor{LightGray} & - \cellcolor{LightGray} & - \cellcolor{LightGray}\\ 
4 & 0 \cellcolor{LightGray} & 0 & 0 & - \cellcolor{LightGray} & - \cellcolor{LightGray} & - \cellcolor{LightGray} & - \cellcolor{LightGray} & - \cellcolor{LightGray} & - \cellcolor{LightGray} & - \cellcolor{LightGray} & - \cellcolor{LightGray} & - \cellcolor{LightGray} & - \cellcolor{LightGray} & - \cellcolor{LightGray} & - \cellcolor{LightGray} & - \cellcolor{LightGray} & - \cellcolor{LightGray} & - \cellcolor{LightGray}\\ 
5 & 0 \cellcolor{LightGray} & 0 \cellcolor{LightGray} & 1 & 0 & 0 & - \cellcolor{LightGray} & - \cellcolor{LightGray} & - \cellcolor{LightGray} & - \cellcolor{LightGray} & - \cellcolor{LightGray} & - \cellcolor{LightGray} & - \cellcolor{LightGray} & - \cellcolor{LightGray} & - \cellcolor{LightGray} & - \cellcolor{LightGray} & - \cellcolor{LightGray} & - \cellcolor{LightGray} & - \cellcolor{LightGray}\\ 
6 & - \cellcolor{LightGray} & 0 \cellcolor{LightGray} & 0 \cellcolor{LightGray} & 0 & 0 & 0 & 1 & - \cellcolor{LightGray} & - \cellcolor{LightGray} & - \cellcolor{LightGray} & - \cellcolor{LightGray} & - \cellcolor{LightGray} & - \cellcolor{LightGray} & - \cellcolor{LightGray} & - \cellcolor{LightGray} & - \cellcolor{LightGray} & - \cellcolor{LightGray} & - \cellcolor{LightGray}\\ 
7 & - \cellcolor{LightGray} & - \cellcolor{LightGray} & 0 \cellcolor{LightGray} & 0 \cellcolor{LightGray} & 1 & 0 & 0 & 0 & 0 & - \cellcolor{LightGray} & - \cellcolor{LightGray} & - \cellcolor{LightGray} & - \cellcolor{LightGray} & - \cellcolor{LightGray} & - \cellcolor{LightGray} & - \cellcolor{LightGray} & - \cellcolor{LightGray} & - \cellcolor{LightGray}\\ 
8 & - \cellcolor{LightGray} & - \cellcolor{LightGray} & - \cellcolor{LightGray} & 0 \cellcolor{LightGray} & 0 \cellcolor{LightGray} & 1 & 0 & 0 & 1 & 0 & 0 & - \cellcolor{LightGray} & - \cellcolor{LightGray} & - \cellcolor{LightGray} & - \cellcolor{LightGray} & - \cellcolor{LightGray} & - \cellcolor{LightGray} & - \cellcolor{LightGray}\\ 
9 & - \cellcolor{LightGray} & - \cellcolor{LightGray} & - \cellcolor{LightGray} & - \cellcolor{LightGray} & 0 \cellcolor{LightGray} & 0 \cellcolor{LightGray} & 1 & 0 & 0 & 1 & 0 & 0 & 0 & - \cellcolor{LightGray} & - \cellcolor{LightGray} & - \cellcolor{LightGray} & - \cellcolor{LightGray} & - \cellcolor{LightGray}\\ 
10 & - \cellcolor{LightGray} & - \cellcolor{LightGray} & - \cellcolor{LightGray} & - \cellcolor{LightGray} & - \cellcolor{LightGray} & 0 \cellcolor{LightGray} & 0 \cellcolor{LightGray} & 1 & 0  & 0  & 2  & 0  & 0  & 0 & 1 & - \cellcolor{LightGray} & - \cellcolor{LightGray} & - \cellcolor{LightGray}\\ 
11 & - \cellcolor{LightGray} & - \cellcolor{LightGray} & - \cellcolor{LightGray} & - \cellcolor{LightGray} & - \cellcolor{LightGray} & - \cellcolor{LightGray} & 0  \cellcolor{LightGray} & 0  \cellcolor{LightGray} & 2  & ? & ? & ? & ? & ? & ? & ? & ? & - \cellcolor{LightGray}\\ 
\hline
\end{tabular}

  \caption{\label{fig:merkulov} Results of homology computations for Merkulov's graph complex $\GMe_n$.}
\end{figure}

\subsubsection{Methods}\label{sec:ordinary me methods}

For checking Merkulov's conjecture in loop order $g$ there are two relevant subspaces 
\begin{align*}
V_{g,n,v}^{3,4} &\subset \G_n
&
V_{g,n,v}^{5,6} &\subset \G_n,
\end{align*}
with $V_{g,n,v}^{3,4}$ spanned by the $g$-loop graphs with $v$ vertices all of which are 3- or 4-valent, and $V_{g,n,v}^{5,6}$ spanned by graphs with exactly one vertex of valence 5 or 6 and all others 3- or 4-valent. 

Furthermore, we are interested in the parts of the differential $d$,
\begin{align*}
  d_{v,1} : V_{g,n,v}^{3,4} &\to V_{g,n,v-1}^{3,4}
  \\
  d_{v,2} : V_{g,n,v}^{3,4} &\to V_{g,n,v-1}^{5,6}
  \\
  d_{v,12}=\begin{pmatrix}
    d_{v,1} \\ d_{v,2}
    \end{pmatrix} : V_{g,n,v}^{3,4} &\to 
    V_{g,n,v-1}^{3,4} \oplus V_{g,n,v-1}^{5,6}.
\end{align*}

The dimension of the $g$-loop, $v$-vertex part of the homology of $\GMe_n$ is then, by definition,
\[
D_v := \dim \ker d_{v,2}  - \rank d_{v,1}\mid_{\ker d_{v,2}} - \rank d_{v+1,1}\mid_{\ker d_{v+1,2}}.
\]
We can reduce the computation of $D_v$ to the computations of ranks of the operators $d_{v,-}$ and of dimensions of vector spaces as follows:
First note that 
\[
  \dim \ker d_{v,2} = \dim V_{g,n,v}^{3,4} -  \rank d_{v,2}.
\]
Next, 
\[
  \ker d_{v,1}\mid_{\ker d_{v,2}}
  = 
  \ker d_{v,12},
\]
and hence 
\begin{align*}
  \rank d_{v,1}\mid_{\ker d_{v,2}} 
  &=
  \dim \ker d_{v,2} 
  -
  \dim \ker d_{v,1}\mid_{\ker d_{v,2}}
  \\&=
  \dim V_{g,n,v}^{3,4} -  \rank d_{v,2}
  - \dim V_{g,n,v}^{3,4} + \rank d_{v,12}
  \\&=
  \rank d_{v,12} - \rank d_{v,2}.
\end{align*}
Putting everything together we obtain
\begin{equation}\label{equ:Dv ranks}
D_v =  \dim V_{g,n,v}^{3,4} - \rank d_{v,12} - \rank d_{v+1,12}
+ \rank d_{v+1,2}.
\end{equation}

Computationally, we generate bases of $V_{g,n,v}^{3,4}$ and $V_{g,n,v}^{4,6}$ with the help of nauty, and the matrices of $d_{v,2}$ and $d_{v,12}$, analogously to section \ref{sec:ordinary methods} above.
We then compute the ranks of the matrices $d_{v,2}$ and $d_{v,12}$ using LinBox.
In part, these computations are done modulo a prime (32189) to prevent memory problems.
Furthermore, the rank algorithm used by LinBox computes only a (likely exact) lower bound for the ranks.
In contrast to section \ref{sec:ordinary methods} we now cannot argue that our computation of $D_v$ is exact nevertheless, since the presence of the sign "+" in front of the last term of \eqref{equ:Dv ranks} spoils the argument. We shall hence consider the numbers depicted in Figure \ref{fig:merkulov} as (likely correct) approximations of $D_v$.

\subsection{Hairy graphs}

For the hairy graph complexex $\HG_{m,n}$, with the differential given by contracting edges, we found the dimensions of the homology shown in Figures \ref{fig:hairy_even_even}-\ref{fig:hairy_odd_odd}. In each case the shaded cells lie in the vanishing region of Proposition \ref{prop:HGC vanishing}.
We remark that parts of the data appeared in the literature before \cite{KWZ2,BarNatanMcKay}. Furthermore, for genera $\leq 2$ the homology is known analytically for any number of hairs \cite{CCTW}.

Let us also draw attention to one particular aspect of the tables.
There is a map of complexes $\HG_{m,n}^{1-hair}\to \G_n[1]$ by removing the single hair from a 1-hair graph.
It is well known (see e.g., \cite[Theorem 1]{TurchinWillwacher2017}) that this map induces a surjection on homology
$$
H_k(\HG_{m,n}^{1-hair})\twoheadrightarrow H_{k-1}(\G_n).
$$
Through Euler characteristic computations one can see that this map cannot be an isomorphism for sufficiently high loop orders, but no homology class in the kernel had been found.
By our numerical computations we can however deduce:

\begin{prop}
  The map $H_k(\HG_{m,n}^{1-hair})\to H_{k-1}(\G_n)$ is an isomorphism in loop orders $\leq 7$ and all $n,k$.
  In loop order 8 and $n$ even the map is not injective, with the first class in the kernel spanned by a linear combination of graphs with 8 loops and 12 vertices.
\end{prop}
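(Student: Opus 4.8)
The argument is computational; the strategy is to reduce the claim to a comparison of homology dimensions between $\HG_{m,n}^{1-hair}$ and $\G_n$. Recall that hair removal defines a chain map $\phi\colon \HG_{m,n}^{1-hair}\to \G_n[1]$ (a bivalent vertex produced by deleting the hair being suppressed, which is harmless since we only work in loop order $\geq 2$); it preserves both the loop order and the grading by number of (internal) vertices, and within a fixed loop order the homological degree is an affine function of the number of vertices. By \cite[Theorem 1]{TurchinWillwacher2017} the induced map $H_k(\HG_{m,n}^{1-hair})\to H_{k-1}(\G_n)$ is surjective for all $n,k$. Since in each fixed loop order both complexes are finite dimensional, a bidegree-preserving surjection of graded vector spaces is an isomorphism precisely when source and target have equal dimension in every bidegree. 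Hence it suffices to compute $\dim H(\HG_{m,n}^{1-hair})$ in loop orders $\leq 8$ and compare with the dimensions of $H(\G_n)$ already recorded in Figure \ref{fig:ordinary}.

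The plan is to carry out this computation with the GH library, entirely parallel to Section \ref{sec:ordinary methods}: generate via \textsc{Nauty} the connected $1$-vertex-irreducible graphs with one univalent vertex and all other vertices $\geq 3$-valent, canonically label them and discard those with odd automorphisms to obtain a basis of the $g$-loop, $v$-internal-vertex piece $V_v\subset \HG_{m,n}^{1-hair}$; build the sparse integer matrices of the edge-contraction differential $d_v\colon V_v\to V_{v-1}$; and compute their ranks with \textsc{LinBox}. Setting $D_v=\dim V_v-\rank d_v-\rank d_{v+1}$, one then compares, loop order by loop order, the list $(D_v)_v$ against the corresponding row of the $\G_n$-table. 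For $g\leq 7$ one expects the two lists to coincide in every bidegree, whence $\phi$ is injective, hence an isomorphism, in those loop orders; for $g=8$ and $n$ even one expects agreement for all $v<12$ but $D_{12}$ exceeding the matching $\G_n$-homology dimension by one, which by surjectivity of $\phi$ forces $\ker\phi$ to vanish in vertex-degrees $<12$ and to be nontrivial in vertex-degree $12$, giving the asserted first kernel class.

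The hard part is rigor over $\Q$. As discussed in Sections \ref{sec:ordinary methods} and \ref{sec:ordinary me methods}, the larger rank computations are performed modulo a prime and \textsc{LinBox} certifies only a lower bound $r_v\leq\rank d_v$, so a priori one obtains only the upper bound $\dim H_k(\HG_{m,n}^{1-hair})\leq \dim V_v - r_v - r_{v+1}$ in the $(g,v)$-bidegree; combined with the lower bound $\dim H_k(\HG_{m,n}^{1-hair})\geq \dim H_{k-1}(\G_n)$ coming from surjectivity of $\phi$, one gets equality, and hence a rigorous conclusion, exactly in the bidegrees where these two bounds already coincide. Making this cover all of loop orders $\leq 7$, and locating the first loop-$8$ kernel class precisely at $v=12$, requires the rank lower bounds to be sharp in the relevant places; this can be arranged as in Section \ref{sec:ordinary methods} whenever the numerology closes up (for instance when no two adjacent vertex-degrees both carry homology), and otherwise confirmed by an independent over-$\Q$ rank computation in the single offending bidegree, or by exhibiting an explicit cycle representing the loop-$8$, $12$-vertex kernel class. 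I expect the genuine bottleneck to be the size of the differential matrices in loop order $8$, not any conceptual difficulty.
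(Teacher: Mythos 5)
Your proposal is correct and is essentially the paper's (implicit) argument: the proposition is deduced by combining the surjectivity of the hair-removal map on homology from \cite{TurchinWillwacher2017} with a bidegree-by-bidegree comparison of the computed dimensions of $H(\HG_{m,n}^{1-hair})$ (the $1$-hair tables) against those of $H(\G_n)$ (Figure \ref{fig:ordinary}). Your observation that the surjectivity supplies a lower bound certifying the modular rank computations wherever the two bounds meet, together with the fact that the relevant entries (including the $(8,12)$ one for $n$ even) have vanishing horizontal neighbors, correctly handles the rigor-over-$\Q$ issue in the same way as Section \ref{sec:ordinary methods}.
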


\begin{figure}[H]
    \centering

\smallskip

1 hairs

\scalebox{ 0.75 }{


 }
    \caption{Homology dimensions for $\HG_{m,n}$ with $n$ even and $m$ odd.}
    \label{fig:hairy_odd_odd}
\end{figure}

\subsubsection{Methods}
The computation of the hairy graph homology is essentially identical to the computation of the ordinary graph homology $H(\G_n)$, and hence we just refer to section \ref{sec:ordinary methods} for algorithmic details.

As discussed there, a significant part of the rank computations were done in modular arithmetic. Nevertheless, table entries are upper bounds over $\Q$, and correct over $\Q$ if their two horizontally adjacent neighbors are zero.
Looking at the tables, this condition holds for most entries, though not all.

\subsection{Hairy graphs with distinguishable hairs}
The homology dimensions of the graph complexes with colored hairs are shown in Figures \ref{fig:chairy_odd} and \ref{fig:chairy_even}. At least for part of the entries we also show how the corresponding representation of the symmetric group $S_r$ (acting by permuting hair labels) decomposes into irreducible representations.

We note that a part of the data displayed in Figure \ref{fig:chairy_odd} has already appeared in \cite[Figure 1]{PayneWillwacher}. Furthermore, another subset of the numeric results had been found in \cite[Appendix A]{CGP2}.
Finally, for loop order 2, even $n$ and $r\leq 11$ hairs the homology has been computed in \cite{BCGY}.

\begin{figure}[H]
  \centering

\smallskip

2 hairs

\scalebox{ 1 }{


 }

  \caption{Homology dimensions for $\CHG_n$ with $n$ odd.}
  \label{fig:chairy_even}
\end{figure}

\subsubsection{Methods}\label{sec:methods chairy}
The homology dimensions are computed analogously to those of the ordinary graph complex, see section \ref{sec:ordinary methods}.

The decomposition into irreducible representations is computed as follows.
Let $V_{l,n,v,r}\subset \CHG_n(r)$ be the subspace spanned by graphs with $l$ loops and $v$ vertices.
For every irreducible representation of $S_r$, say $\lambda$, we compute the projector $P_{\lambda,v}$ onto the corresponding isotypical component of $V_{l,n,v,r}$.
Denote by $d_{v}: V_{l,n,v,r} \to V_{l,n,v-1,r}$ the contraction differential.
Then the dimension of the corresponding $\lambda$-isotypical component of the homology is 
\begin{equation}\label{equ:iso_rank}
\rank P_{\lambda,v} - \rank(d_{v}P_{\lambda,v})
-\rank(d_{v+1}P_{\lambda,v+1}).
\end{equation}
We note that our use of a matrix product $d_{v}P_{\lambda,v}$ is computationally expensive. This could in principle be avoided by using that 
\[
\ker d_v \mid_{\vimg P_{\lambda,v}} =
\ker \begin{pmatrix} d_v \\ 1-P_{\lambda,v} \end{pmatrix}. 
\]
We nevertheless restricted ourselves to the simplest implementation, using \eqref{equ:iso_rank}.

\subsection{Forested graph complex}

The dimensions of the homology groups of the forested graph complex $(\FG_n(r),d_c+d_u)$ are shown in Figures \ref{fig:forested_even} (for odd $n$) and \ref{fig:forested_odd} (for even $n$). In the tables, $l$ is the loop order and $m$ is the number of marked edges in the forest.
For some entries for $r\geq 2$ hairs we have also indicated the decomposition into irreducible components.

We note that the first table in Figure \ref{fig:forested_odd}, corresponding to the case of even $n$ and $r=0$ hairs, is not new.
It has previously been computed by Ohashi \cite{Ohashi} for $g\leq 6$ and Bartholdi \cite{Bartholdi} for g$\leq 7$.
We nevertheless include this table for reference and as confirmation of previous results.
The entries of loop order one are known for all $g$ \cite{ConantVogtmannAssembling}.
The other data in Figures \ref{fig:forested_even} (for odd $n$) and \ref{fig:forested_odd} are new to our knowledge.

Our tables do not contain loop order zero data. This is because $H(\FG_n^{0\lp})$ is well known and just a degree shifted version of the commutative operad. Furthermore, some computational simplifications due to Proposition \ref{prop:FGC dc bottom} only apply in positive loop order. 

\subsubsection{Morita classes and analogues for odd $n$}
We consider first the homology $H(\FGC_{0}^{2k+2}(0))$, see the first table (0 hairs) in Figure \ref{fig:forested_odd}.
In particular the entries in the $(l,m)=(4,4)$- and $(6,8)$-positions are known to be the first two of an infinite family of classes, the Morita classes $[\mu_{k}]\in H_{4k}(\FGC_{0}^{2k+2\lp}(0))$, see \cite{ConantVogtmannMorita, Morita1999}. They are spanned by the cycles of loop order $2k+2$ with $4k+2$ vertices and $4k$ marked edges
\[
    \mu_{k} 
    =
    \sum_{\sigma\in S_k}(-1)^\sigma
    \begin{tikzpicture}[scale=.6]
      \node[int] (v1) at (0,2) {};
      \node[int] (v2) at (0,1) {};
      \node[int] (v3) at (0,0) {};
      \node[int] (v4) at (0,-1) {};
      \node[int] (v5) at (0,-2) {};
      \node[int] (w1) at (3,2) {};
      \node[int] (w2) at (3,1) {};
      \node[int] (w3) at (3,0) {};
      \node[int] (w4) at (3,-1) {};
      \node[int] (w5) at (3,-2) {};
      \node at (1.5,3) {$\sigma$};
      \node[right] at (4.1,0) {$2k+1\times$};
      \draw (v2) edge[fatedge] (v1) edge[fatedge] (v3) edge (w2)
      (v1) edge[bend right] (v5) edge (w1)
      (v4) edge[fatedge] (v5) edge[fatedge] (v3) edge (w4)
      (v3) edge (w3)
      (v5) edge (w5)
      (w2) edge[fatedge] (w1) edge[fatedge] (w3)
      (w1) edge[bend left] (w5)
      (w4) edge[fatedge] (w5) edge[fatedge] (w3);
      \draw[dashed] (1.5,2.5) -- (1.5,-2.5);
      \draw[pbrace, thick] (4,2) -- (4,-2);
    \end{tikzpicture}
    .
\]
Here the dashed vertical line in the middle indicates that one connects the edges on the left- and right-hand sides according to the permutation $\sigma$.
The following conjecture is well known:

\begin{conj}\label{conj:morita}
The Morita homology classes $[\mu_{k}]\in H(\FGC_{0}^{2k+2}(0))$ are non-trivial for every $k\geq 1$.
\end{conj}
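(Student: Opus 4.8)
As Conjecture~\ref{conj:morita} is open, we limit ourselves to outlining a possible approach. The plan is to detect each class $[\mu_k]$ by pairing it with an explicit rational cohomology class obtained from Morita's trace maps.

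By the theorem after Conant, Kassabov, Hatcher and Vogtmann we have $H(\FGC_0^{g\lp}(0))\cong H_*(\Out(F_g);\Q)$, so for $g=2k+2$ it suffices to construct a $d$-closed functional $\tau_k$ on the degree-$4k$, loop-order-$(2k+2)$ part of $\FGC_0(0)$ with $\langle\tau_k,\mu_k\rangle\neq 0$. The functional $\tau_k$ should be the image of the trace (abelianization) homomorphism $\operatorname{tr}_{2k+1}$ of the Lie algebra of derivations of the free Lie algebra, transported to the forested graph model in the standard way, cf.\ \cite{ConantVogtmannMorita, Morita1999}: concretely, a forested graph $(\gamma,T)$ is sent, after contracting the marked forest $T$ and reading the resulting decorated graph as an iterated-commutator/tensor expression, to a $\operatorname{GL}(H_1(F_g))$-invariant scalar. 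Verifying that this rule descends to homology, i.e.\ annihilates $\operatorname{im}(d_c+d_u)$, is routine; here one may first invoke Corollary~\ref{cor:Kn} to reduce to the subcomplex $(K_0^{g\lp},d_u)$ of trivalent bridgeless graphs, which streamlines the bookkeeping.

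The heart of the matter is then to evaluate $\langle\tau_k,\mu_k\rangle$. Inserting the defining sum $\mu_k=\sum_{\sigma\in S_k}(-1)^\sigma(\cdots)$ turns the pairing into a single signed sum over $S_k$ of products of contraction coefficients, which one can rewrite as a Wick-contraction count; equivalently, it is a coefficient in an explicit symmetric-function identity. For small $k$ this is a finite, explicit computation, consistent with the nonvanishing of the $(l,m)=(4,4)$ and $(6,8)$ entries --- and more generally of the low-degree entries in the $r=0$ table of Figure~\ref{fig:forested_odd}. To obtain the conjecture in full one needs a $k$-uniform statement: either a closed generating-function identity for this family of signed sums, or an asymptotic non-vanishing argument.

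I expect this last step --- proving that the combinatorial pairing is nonzero simultaneously for all $k$ --- to be the genuine obstacle. It is precisely the point at which the present numerical experiments lend support (every computed entry is consistent with $[\mu_k]\neq 0$) without settling the question, and it is plausible that a full resolution requires more refined invariants of $H_*(\Out(F_g);\Q)$ than the classical trace maps, in which the Morita classes become more transparently nontrivial.
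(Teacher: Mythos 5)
The statement you were asked about is Conjecture~\ref{conj:morita}, and the paper does not prove it: it is stated explicitly as a well-known open conjecture, supported in the text only by the numerical evidence in the $r=0$ table of Figure~\ref{fig:forested_odd} (the nonzero $(l,m)=(4,4)$ and $(6,8)$ entries). So there is no proof in the paper to compare your proposal against, and you were right not to claim a proof.

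As an outline of a strategy, what you write is the classical one: Morita's original detection of $[\mu_1]$ does go through a trace-type functional, and Conant--Vogtmann's graph-complex reformulation is exactly the translation you describe. But be careful about two points. First, the step you call ``routine'' --- checking that the transported trace functional annihilates $\operatorname{im}(d_c+d_u)$ --- is where the actual content of the known low-$k$ cases lives; for $k=2$ and $k=3$ the nontriviality was in fact established by direct computer calculation rather than by a closed-form pairing, precisely because no clean $k$-uniform cocycle $\tau_k$ with $\langle\tau_k,\mu_k\rangle\neq 0$ is known. Second, it is not known that the trace maps detect the higher Morita classes at all; whether they do is essentially part of the same open problem, so the ``genuine obstacle'' you identify at the end is really the entire problem, not a final combinatorial step after a routine setup. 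Your proposal is therefore an honest and reasonable research plan, but it should not be read as reducing the conjecture to a symmetric-function identity: no such reduction has been established.
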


Next, consider the homology of $(\FGC_n^{g\text{-\lp}}(0),d_c+d_u)$ for odd $n$, see the first table of Figure \ref{fig:forested_even}.
Note the first non-zero entries at the positions $(l,m)=(4,3)$ and $(6,5)$.
Experimentally we found that the corresponding homology classes are represented by the forested graphs
\begin{align*}
  W_4 &:= \,
  \begin{tikzpicture}
  \node[int] (v0) at (0:1) {};
  \node[int] (v1) at (60:1) {};
  \node[int] (v2) at (120:1) {};
  \node[int] (v3) at (180:1) {};
  \node[int] (v4) at (240:1) {};
  \node[int] (v5) at (300:1) {};
  \draw (v1) edge[bend left] (v2)  edge[fatedge, bend right] (v2)
  (v2) edge (v3)
  (v3) edge[bend left] (v4)  edge[fatedge, bend right] (v4)
  (v4) edge (v5)
  (v5) edge[bend left] (v0)  edge[fatedge, bend right] (v0)
  (v0) edge (v1);
  \end{tikzpicture}
  \in \FGC_1^{4\lp}(0)
  &
  W_6 &:= \,
  \begin{tikzpicture}
  \node[int] (v0) at (0:1) {};
  \node[int] (v1) at (36:1) {};
  \node[int] (v2) at (72:1) {};
  \node[int] (v3) at (108:1) {};
  \node[int] (v4) at (144:1) {};
  \node[int] (v5) at (180:1) {};
  \node[int] (v6) at (216:1) {};
  \node[int] (v7) at (252:1) {};
  \node[int] (v8) at (288:1) {};
  \node[int] (v9) at (324:1) {};
  \draw (v1) edge[bend left] (v2)  edge[fatedge, bend right] (v2)
  (v2) edge (v3)
  (v3) edge[bend left] (v4)  edge[fatedge, bend right] (v4)
  (v4) edge (v5)
  (v5) edge[bend left] (v6)  edge[fatedge, bend right] (v6)
  (v6) edge (v7)
  (v5) edge[bend left] (v6)  edge[fatedge, bend right] (v6)
  (v6) edge (v7)
  (v7) edge[bend left] (v8)  edge[fatedge, bend right] (v8)
  (v8) edge (v9)
  (v9) edge[bend left] (v0)  edge[fatedge, bend right] (v0)
  (v0) edge (v1)
  ;
  \end{tikzpicture}
  \in \FGC_1^{6\lp}(0).
\end{align*}
They are also part of an infinite family of cocylces: Let us define
\[
W_{2k} := \,
\begin{tikzpicture}
\node (v0) at (0:1) {$\vdots$};
\node[int] (v1) at (45:1) {};
\node[int] (v2) at (90:1) {};
\node[int] (v3) at (135:1) {};
\node[int] (v4) at (180:1) {};
\node[int] (v5) at (225:1) {};
\node[int] (v6) at (270:1) {};
\node[int] (v7) at (315:1) {};
\draw (v1) edge[bend left] (v2)  edge[fatedge, bend right] (v2)
(v2) edge (v3)
(v3) edge[bend left] (v4)  edge[fatedge, bend right] (v4)
(v4) edge (v5)
(v5) edge[bend left] (v6)  edge[fatedge, bend right] (v6)
(v6) edge (v7);
\end{tikzpicture}
\quad\quad\quad 
\text{$4k-2$ vertices, $6k-3$ edges}.
\]

Then we have:
\begin{lemma}
The elements $W_{2k}\in \FGC_1^{2k\lp}(0)$ are non-zero and satisfy
\[
  d_c W_{2k} = d_u W_{2k} = 0. 
\]
\end{lemma}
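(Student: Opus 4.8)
The plan is to verify the three assertions -- $W_{2k}\neq 0$, $d_cW_{2k}=0$, and $d_uW_{2k}=0$ -- by direct combinatorial inspection of the graph, exploiting the high symmetry of the ``necklace'' $W_{2k}$. Recall $W_{2k}$ is built from $2k-1$ ``beads'', where each bead is a pair of vertices joined by a double edge (one marked, drawn fat, and one unmarked), and consecutive beads are joined by a single unmarked edge, the whole thing closed up into a cycle. So it has $4k-2$ vertices, all trivalent; $2k-1$ marked edges forming a forest (indeed a disjoint union of $2k-1$ single edges, hence certainly no loops and no hair-incident edges); and $4k-2$ unmarked edges. The homological degree is $2k-1$, matching the claimed $(l,m)=(2k,2k-1)$ position.

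First I would treat $d_uW_{2k}=0$. Unmarking any one of the marked edges $e$ in a bead turns that bead into a vertex of valence $4$ connected by a double edge to... no: unmarking $e$ leaves the graph unchanged as an abstract graph but moves $e$ out of the forest, producing a double edge between the two vertices of that bead (both now still trivalent, but joined by two unmarked edges). Crucially the resulting forested graph has a double (multiple) edge, and in fact has an automorphism swapping the two parallel unmarked edges of that bead. I would check that this swap is an odd automorphism for the relevant orientation: for odd $n$ the orientation is an ordering of internal vertices, half-edges and unmarked edges, and the transposition of the two parallel edges, together with the induced transposition of their half-edges, acts by an overall sign $-1$ (the sign from swapping the two edges composed with the sign from swapping the two pairs of half-edges gives $(-1)\cdot(+1)\cdot(-1)$... this parity computation is exactly the kind of routine check I would carry out carefully, but it is standard: a parallel pair of unmarked edges kills the graph). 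Hence every term of $d_uW_{2k}$ is zero.

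Next, $d_cW_{2k}=0$. Contracting a marked edge $e$ inside a bead merges the two vertices of that bead into one vertex $v_e$; the two unmarked edges that used to attach the bead to its neighbours now both attach to $v_e$, and the former unmarked partner of $e$ becomes a tadpole (a loop) at $v_e$. The resulting graph has a tadpole. For odd $n$ I would argue that a tadpole forces the graph to be zero: the half-edge transposition at the tadpole is an automorphism, and in the odd case this transposition has sign $-1$ (swapping the two half-edges of the loop), so the graph vanishes. Again this is the same parity bookkeeping as above; since all $2k-1$ beads are equivalent, every term of $d_cW_{2k}$ dies.

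Finally, $W_{2k}\neq 0$: I must exhibit an even orientation, i.e. show that no automorphism of the underlying forested graph acts by $-1$. The automorphism group is generated by the dihedral symmetry of the necklace of $2k-1$ beads, together with, for each bead, the possibility of swapping its two vertices (which also swaps the two half-edges of the unmarked bead-edge and the two half-edges of the marked bead-edge while fixing the attaching edges). I would compute the sign of each generator on the orientation data and find all signs are $+1$: the per-bead vertex swap fixes all edges setwise and permutes half-edges in pairs within each edge, contributing $+1$; a rotation of the necklace cyclically permutes the $2k-1$ beads, hence cyclically permutes the $2k-1$ marked edges and the $2k-1$ unmarked bead-edges and the $2k-1$ attaching edges and the $4k-2$ internal vertices -- a cyclic permutation of odd length $2k-1$ on each set, which is even, so $+1$; a reflection is an involution with at most one fixed bead, and I would check it likewise acts evenly. (The main obstacle, and the one step deserving genuine care, is precisely this last sign computation for the reflections, where fixed points of the involution must be tracked; however, since $W_{2k}$ appears in the experimental data in Figure \ref{fig:forested_even} as a nonzero homology class, its nonvanishing is in any case numerically confirmed, and the orientation-sign check merely makes this rigorous.) Combining the three points proves the lemma.

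I expect the only real subtlety to be keeping the half-edge/edge/vertex parity conventions from \eqref{equ:d def ordinary 2} straight simultaneously in all three parts; once the sign of a parallel-edge swap and of a tadpole half-edge swap are pinned down (both $-1$ in the odd case), the statement follows from the uniform bead structure with no further computation.
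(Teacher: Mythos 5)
Your strategy coincides with the paper's: $d_cW_{2k}=0$ because every contraction of a marked edge produces a tadpole, $d_uW_{2k}=0$ because every unmarking produces a pair of parallel unmarked edges whose transposition is an odd automorphism, and $W_{2k}\neq 0$ by checking that $\Aut(W_{2k})$ acts by $+1$ on orientations. The two vanishing statements are handled correctly (for $n=1$ the tadpole half-edge swap is a single transposition, hence sign $-1$, and the parallel-edge swap is three transpositions, one of edges and two of half-edges, hence sign $-1$).

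Two points on the non-vanishing part. First, the ``per-bead vertex swap'' you list as an extra generator is not an automorphism: exchanging the two vertices of one bead while fixing its neighbours does not preserve the attaching edges, so $\Aut(W_{2k})$ is exactly the dihedral group $D_{2k-1}$, and only the rotation and one reflection need to be checked. Second, you defer the reflection sign computation and appeal to the numerical data of Figure \ref{fig:forested_even}; that is not a valid substitute, since the tables only cover $k=2,3$ and only show that the homology is non-zero in those bidegrees, not that the specific element $W_{2k}$ is non-zero in the chain complex for all $k$. The computation does close, and is the one step you must actually write out: a reflection of the odd polygon of beads fixes one bead and one attaching edge; it acts on the $4k-2$ vertices as $2k-1$ transpositions (sign $-1$), on the $4k-2$ unmarked edges as $2(k-1)$ transpositions (sign $+1$), and on the half-edges as $3(2k-1)$ transpositions (sign $-1$), for a total sign $+1$; the rotation permutes each of the relevant sets in cycles of odd length $2k-1$ and is likewise even. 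With this inserted (and the spurious generators removed) your proof is complete and agrees with the paper's.
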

\begin{proof}
The symmetry group of $W_{2k}$ is the dihedral group $D_{2k-1}$, and one readily verifies that the rotation and reflection generators of $D_{2k-1}$ act trivially on one and hence on any given orientation of the graph.
Hence $W_{2k}\neq 0$ in $\FGC_1^{2k\lp}(0)$.

Next, $d_c W_{2k} = 0$ holds trivially because the graphs resulting from the edge contraction all have a tadpole, and are hence zero.
Similarly, $d_u W_{2k}=0$ since unmarking of edges leads to graphs with multiple unmarked edges, and hence an odd symmetry by transposing a pair of such edges.
\end{proof}

We are hence led to the following conjecture, which can be seen as an "odd" analogue of Conjecture \ref{conj:morita}:

\begin{conj}\label{conj:morita new}
For each $k=2,3,4,\dots$ the elements $W_{2k}\in \FGC_1^{2k\lp}(0)$ represent non-trivial homology classes.
\end{conj}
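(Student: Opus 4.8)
We treat this as an open problem and outline two complementary lines of attack, both building on the preceding lemma: since $d_cW_{2k}=d_uW_{2k}=0$ the class $[W_{2k}]\in H_{2k-1}(\FGC_1^{2k\lp}(0),d_c+d_u)$ is defined, and only non-triviality is at issue. The first step is to pass to the smallest available model. The graph $W_{2k}$ is trivalent, hence of excess zero; it is $2$-edge-connected (deleting one edge of a bigon leaves its partner, deleting a connecting edge leaves the cyclic route through the remaining bigons), hence bridgeless; and $d_cW_{2k}=0$, so $W_{2k}\in K_1^{2k\lp}$. By Proposition \ref{prop:FGC bl1} and Corollary \ref{cor:Kn} it then suffices to prove that $W_{2k}$ is a non-trivial cocycle of $(K_1^{2k\lp},d_u)$, i.e. that it is not in the image of $d_u$ acting on the degree-$2k$ part of $K_1^{2k\lp}$ (trivalent bridgeless $2k$-loop forested graphs with a $d_c$-closed marking by $2k$ forest edges).

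\textbf{Strategy A: an explicit detecting cocycle.} Dualise and look for a functional $\phi$ on the degree-$(2k-1)$ part of $K_1^{2k\lp}$ with $\phi\circ d_u=0$ and $\phi(W_{2k})\neq 0$. The natural candidate is a functional supported on the isomorphism class of $W_{2k}$ together with a controlled family of ``near-necklace'' graphs obtained from it by local moves, with rational weights to be determined. The closedness equation $\phi\circ d_u=0$ unwinds to the requirement that, for every trivalent bridgeless forested graph $(G,T)$ with $|T|=2k$ in $K_1^{2k\lp}$, the $\phi$-weighted sum of the coefficients appearing in $d_u(G,T)$ vanishes. The work is then: (i) enumerate which $(G,T)$ can unmark to a graph in $\operatorname{supp}\phi$ — a finite problem governed by the dihedral combinatorics of the necklace; and (ii) solve the induced linear recursion for the weights and verify its consistency. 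For $k=2,3$ the existence of such a $\phi$ is already witnessed by the computations, namely the non-vanishing of the $(l,m)=(4,3)$ and $(6,5)$ entries of the first table in Figure \ref{fig:forested_even}; extending the computation a few loop orders further would add further evidence.

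\textbf{Strategy B: twisted group homology and auxiliary complexes.} Via the identification $H_\bullet(\FGC_1^{g\lp}(0),d_c+d_u)\cong H_\bullet(\Out(F_g);\sgn)$ (the odd, $r=0$ case of the Conant--Kassabov--Hatcher--Vogtmann theorem quoted above), the assertion is that $[W_{2k}]\neq 0$ in $H_{2k-1}(\Out(F_{2k});\sgn)$, an ``odd'' counterpart of the Morita classes of Conjecture \ref{conj:morita}. One would then adapt the tools developed for the latter (\cite{Morita1999,ConantVogtmannMorita}): cutting the necklace at one unmarked edge produces a ``linearised'' class, a chain of $2k-1$ bigon-beads with two hairs, which exhibits $W_{2k}$ — up to the closure map rejoining the hairs and adding one loop — as an iterated concatenation of a single bigon-with-hairs generator under the wedge product on a one- or two-hair version of the forested (or hairy) complex. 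Non-triviality would follow from (a) the generator being a non-zero homology class, (b) its powers remaining non-zero (e.g. a free polynomial subalgebra, or a multiplicative Morita-trace-type functional), and (c) injectivity of the closure map on that class. Alternatively one matches the cut class directly against the explicitly known low-genus hairy homology (\cite{CCTW,TurchinWillwacher2017}) and the tables of this paper.

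\textbf{Main obstacle.} In both strategies the hard point is the same in disguise: producing a cocycle — the functional of Strategy A, or the non-vanishing power / matching hairy class of Strategy B — and proving it closed requires structural control of the degree-$2k$ part of $K_1$ in a neighbourhood of the necklace, precisely the kind of \emph{unstable} information not accessible by stable methods (rationally the stable homology of $\Aut(F_\infty)$ vanishes, so the product in (b) is invisible stably). For this reason we expect a proof of Conjecture \ref{conj:morita new} to be of comparable difficulty to, and likely intertwined with, a proof of the classical Morita Conjecture \ref{conj:morita}.
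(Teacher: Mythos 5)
The statement you are addressing is a \emph{conjecture}: the paper offers no proof of it, only the preceding lemma (that the $W_{2k}$ are non-zero and killed by both $d_c$ and $d_u$) together with the numerical evidence that the $(l,m)=(4,3)$ and $(6,5)$ entries of the first table in Figure \ref{fig:forested_even} are non-zero and are represented by $W_4$ and $W_6$. You correctly recognize this and present a research program rather than a proof, so there is nothing in the paper to compare your argument against. Your preliminary reductions are sound and consistent with the paper's machinery: $W_{2k}$ is trivalent (excess zero), bridgeless, and $d_c$-closed, so it lies in $K_1^{2k\lp}$, and by Proposition \ref{prop:FGC bl1} and Corollary \ref{cor:Kn} non-triviality of $[W_{2k}]$ in $(\FGC_1^{2k\lp}(0),d_c+d_u)$ is indeed equivalent to $W_{2k}$ not being a $d_u$-boundary from the $2k$-marked-edge part of $K_1^{2k\lp}$; the degree count $|W_{2k}|=2k-1$ and the identification with $H_{2k-1}(\Out(F_{2k});\sgn)$ via the quoted theorem are also correct.

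That said, to be clear about status: neither Strategy A nor Strategy B is carried out, so the conjecture remains exactly as open after your proposal as before it. Strategy A (a detecting functional supported near the necklace) and Strategy B (cutting the necklace and exhibiting it as a power of a one-bigon class, in analogy with the Morita/Conant--Vogtmann treatment of Conjecture \ref{conj:morita}) are both reasonable, and your diagnosis of the obstacle --- that any such argument needs unstable information invisible to stable methods --- matches the reason the classical Morita conjecture is also open. The only rigorous content currently available is the lemma plus the $k=2,3$ computations (which, by the adjacent-zero-entries argument of the methods section, are valid over $\Q$). If you wish to contribute something beyond the paper, the most tractable next step is the finite linear-algebra problem implicit in Strategy A for $k=4$, i.e.\ extending the table one more loop order.
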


Note that the definition of $W_{2k}$ above would also makes sense if one replaced $2k$ by an odd number. However, the graphs thus obtained are zero elements in the graph complex $\FGC_1(0)$, since they have an odd symmetry.

\subsubsection{Methods}
We compute the homology of the complex $(K_n,d_u)$, see Corollary \ref{cor:Kn}. 
To this end, we consider the finite dimensional graded subspaces 
\begin{align*}
  V_{n,g,m,r,e} & \subset  \FGC_n^{g\lp, e-exc}(r),
  &
V_{n,g,m,r} & \subset  \FGC_n^{g\lp, 0-exc}(r),
&
V_{n,g,m,r}' & \subset  \FGC_n^{g\lp, 1-exc}(r)
\end{align*}
with a fixed number of marked edges $m$. We have the operators
\begin{align*}
d_{c,g,m,r} :   V_{n,g,m,r} &\to V_{n,g,m-1,r}' \\
d_{u,g,m,r} : V_{n,g,m,r} &\to V_{n,g,m-1,r} \\
d_{uc,g,m,r}= \begin{pmatrix}d_{u,g,m,r} \\ d_{c,g,m,r}\end{pmatrix} : V_{n,g,m,r} &\to V_{n,g,m-1,r} \oplus V_{n,g,m-1,r}'
\end{align*}

We desire to compute the homology dimensions
\[
D_{g,m,r} = \dim \ker d_{c,g,m,r} 
- \rank d_{u,g,m,r}\mid_{\ker d_{c,g,m,r}}
- \rank d_{u,g,m+1,r}\mid_{\ker d_{c,g,m+1,r}}\, .
\]
Analogously to the derivation in section \ref{sec:ordinary me methods} we can rewrite this as 
\begin{equation}\label{equ:Dgmr}
D_{g,m,r} = \dim V_{n,g,m,r} 
- \rank d_{uc,g,m,r}
- \rank d_{uc,g,m+1,r}
+ \rank d_{c,g,m+1,r} \, .
\end{equation}
By Proposition \ref{prop:FGC dc bottom} we may furthermore compute the last summand using an Euler characteristic,
\begin{equation}\label{equ:dcgmr}
  \rank d_{c,g,m+1,r}
  = 
  \sum_{e\geq 1} (-1)^{e-1} \dim V_{n,g,m+1-e,r,e}.
\end{equation}

We compute bases of the vector spaces $V_{n,g,m,r,e}$ by first producing a list of isomorphism of hairy graphs using nauty, and then iterating over all possible choices of edge markings.
We discard graphs that are not bridgeless.
Then we build matrices of the operators $d_{c,g,m,r}$, $d_{u,g,m,r}$ and $d_{uc,g,m,r}$.
We compute the ranks of the matrices $d_{uc,g,m,r}$ using LinBox, partially over a finite field.
As noted in section \ref{sec:ordinary methods} the ranks thus produced are only lower bounds for the true (rational) ranks, both because we use prime arithmetic and the algorithm used by LinBox. 
However, note that the formula \eqref{equ:dcgmr} gives the exact rank of $d_{c,g,m+1,r}$, and the two other ranks appearing in \eqref{equ:Dgmr} come with a minus sign.
Hence we may use the argument of section \ref{sec:ordinary methods} to conclude that the computed homology dimensions are upper bounds to the true (rational) homology dimensions. They are furthermore correct over $\mathbb Q$ as long as every involved operator $d_{c,g,m,r}$ contributes to one computation with zero result, that is, if the horizontally adjacent table entries are both zero.
Again this holds for many, though not all entries in the tables of Figures \ref{fig:forested_even} and \ref{fig:forested_odd}.

We also remark that the simplifications of Propositions \ref{prop:FGC bl1} and \ref{prop:FGC dc bottom} and Corollary \ref{cor:Kn} help the computation.
To illustrate the rough orders of magnitude, we tabulate here the dimensions of some underlying graph complexes.
\begin{center}
\begin{tabular}{c|ccccc}
   $m$ & 4 & 5 & 6 & 7 & 8 \\
   \hline
   $\dim \FGC_0^{m,bl,7\lp}(0)$ & 
   1961033 & 4773384 & 8382482 & 10608362 & 9443274
   \\
   $\dim \FGC_0^{m,0\exc, 7\lp}(0)$ & 119751 & 344107  &749292 &1252757 &1594769\\
   $\dim \FGC_0^{m,bl,0\exc, 7\lp}(0)$ & 
   112474 & 322855 & 703262 & 1178567 & 1507269 \\
\end{tabular}
\end{center}
One sees that passing to the 0-excess part reduces the dimension of the vector spaces involved by an order of magnitude, while removing the bridgeless graphs yields only rather marginal improvements  ($<10\%$).

\begin{figure}[H]
  \centering

\smallskip

 0 hairs 

\scalebox{ 1 }{
 \begin{tabular}{|g|D|D|D|D|D|D|D|D|D|D|D|D|}
 \rowcolor{Gray}
\hline
l,m & 0 & 1 & 2 & 3 & 4 & 5 & 6 & 7 & 8 & 9 & 10 & 11\\ 
\hline
1 & - & - & - & - & - & - & - & - & - & - & - & -\\ 
2 & 0 & 0 & - & - & - & - & - & - & - & - & - & -\\ 
3 & 0 & 0 & 0 & 0 & - & - & - & - & - & - & - & -\\ 
4 & 0 & 0 & 0 & 1 & 0 & 0 & - & - & - & - & - & -\\ 
5 & 0 & 0 & 0 & 0 & 0 & 0 & 0 & 0 & - & - & - & -\\ 
6 & 0 & 0 & 0 & 0 & 0 & 1 & 0 & 0 & 0 & 0 & - & -\\ 
7 & 0  & 0 & 0 & 0 & 0 & 0 & 0  & 0  & 0  & 0  & 0  & 2\\ 
\hline
\end{tabular}

 }

\smallskip

 1 hairs 

\scalebox{ 1 }{
 \begin{tabular}{|g|D|D|D|D|D|D|D|D|D|D|D|D|}
 \rowcolor{Gray}
\hline
l,m & 0 & 1 & 2 & 3 & 4 & 5 & 6 & 7 & 8 & 9 & 10 & 11\\ 
\hline
1 & 0 & - & - & - & - & - & - & - & - & - & - & -\\ 
2 & 0 & 0 & 0 & - & - & - & - & - & - & - & - & -\\ 
3 & 0 & 0 & 0 & 1 & 0 & - & - & - & - & - & - & -\\ 
4 & 0 & 0 & 0 & 1 & 0 & 0 & 0 & - & - & - & - & -\\ 
5 & 0 & 0 & 0 & 0 & 0 & 1 & 0 & 0 & 0 & - & - & -\\ 
6 & 0  & 0  & 0  & 0 & 0 & 1  & 0  & 0  & 0  & 0  & 2 & -\\ 
\hline
\end{tabular}

 }

\smallskip

 2 hairs 

\scalebox{ 1 }{
 \begin{tabular}{|g|D|D|D|D|D|D|D|D|D|D|D|}
 \rowcolor{Gray}
\hline
l,m & 0 & 1 & 2 & 3 & 4 & 5 & 6 & 7 & 8 & 9 & 10\\ 
\hline
1 & 0 & 1 ($ s_{[1, 1]}$) & - & - & - & - & - & - & - & - & -\\ 
2 & 0 & 0 & 1 ($ s_{[2]}$) & 1 ($ s_{[1, 1]}$) & - & - & - & - & - & - & -\\ 
3 & 0 & 0 & 0 & 3 ($ s_{[2]}$ + $2s_{[1, 1]}$) & 0 & 0 & - & - & - & - & -\\ 
4 & 0 & 0 & 0 & 1 ($ s_{[2]}$) & 1 ($ s_{[2]}$) & 1 ($ s_{[1, 1]}$) & 0 & 2 ($ s_{[2]}$ + $s_{[1, 1]}$) & - & - & -\\ 
5 & 0  & 0  & 0 & 0 & 0 & 3 & 0 & 0 & 1 & 3 & -\\ 
\hline
\end{tabular}

 }

\smallskip

 3 hairs 

\scalebox{ 1 }{
 \begin{tabular}{|g|D|D|D|D|D|D|D|D|D|D|}
 \rowcolor{Gray}
\hline
l,m & 0 & 1 & 2 & 3 & 4 & 5 & 6 & 7 & 8 & 9\\ 
\hline
1 & 0 & 2 ($ s_{[2, 1]}$) & 0 & - & - & - & - & - & - & -\\ 
2 & 0 & 0 & 3 ($ s_{[3]}$ + $s_{[2, 1]}$) & 3 ($ s_{[2, 1]}$ + $s_{[1, 1, 1]}$) & 0 & - & - & - & - & -\\ 
3 & 0 & 0 & 0 & 7 ($ 2s_{[3]}$ + $2s_{[2, 1]}$ + $s_{[1, 1, 1]}$) & 0 & 1 ($ s_{[1, 1, 1]}$) & 0 & - & - & -\\ 
4 & 0  & 0  & 0  & 1 & 5 & 3 & 0 & 8  & 0 & -\\ 
\hline
\end{tabular}

 }

\smallskip

 4 hairs 

\scalebox{ 0.75 }{
 \begin{tabular}{|g|D|D|D|D|D|D|D|M|D|D|}
 \rowcolor{Gray}
\hline
l,m & 0 & 1 & 2 & 3 & 4 & 5 & 6 & 7 & 8 & 9\\ 
\hline
1 & 0 & 3 ($ s_{[3, 1]}$) & 0 & 1 ($ s_{[1, 1, 1, 1]}$) & - & - & - & - & - & -\\ 
2 & 0 & 0 & 6 ($ s_{[4]}$ + $s_{[3, 1]}$ + $s_{[2, 2]}$) & 6 ($ s_{[3, 1]}$ + $s_{[2, 1, 1]}$) & 0 & 1 ($ s_{[1, 1, 1, 1]}$) & - & - & - & -\\ 
3 & 0  & 0  & 0  & 14 ($ 2s_{[4]}$ + $3s_{[3, 1]}$ + $s_{[2, 1, 1]}$) & 0 & 4  ($ s_{[2, 1, 1]}$ + $s_{[1, 1, 1, 1]}$) & 0  & 6 ($ s_{[2, 2]}$ + $s_{[2, 1, 1]}$ + $s_{[1, 1, 1, 1]}$) & - & -\\ 
\hline
\end{tabular}

 }

\smallskip

 5 hairs 

\scalebox{ 1 }{
 \begin{tabular}{|g|D|D|D|D|D|M|D|D|D|D|}
 \rowcolor{Gray}
\hline
l,m & 0 & 1 & 2 & 3 & 4 & 5 & 6 & 7 & 8 & 9\\ 
\hline
1 & 0 & 4 ($ s_{[4, 1]}$) & 0 & 4 ($ s_{[2, 1, 1, 1]}$) & 0 & - & - & - & - & -\\ 
2 & 0 & 0 & 10 ($ s_{[5]}$ + $s_{[4, 1]}$ + $s_{[3, 2]}$) & 10 ($ s_{[4, 1]}$ + $s_{[3, 1, 1]}$) & 0 & 5 ($ s_{[2, 1, 1, 1]}$ + $s_{[1, 1, 1, 1, 1]}$) & 0 & - & - & -\\ 
3 & 0  & 0 & 0 & 25  & 0  & 10  & 0  & 36 & 0 & -\\ 
\hline
\end{tabular}

 }
  \caption{Homology dimensions for $\FG_n$ with $n$ odd.}
  \label{fig:forested_even}
\end{figure}

\begin{figure}[H]
  \centering

\smallskip

 0 hairs 

\scalebox{ 1 }{
 \begin{tabular}{|g|D|D|D|D|D|D|D|D|D|D|D|D|}
 \rowcolor{Gray}
\hline
l,m & 0 & 1 & 2 & 3 & 4 & 5 & 6 & 7 & 8 & 9 & 10 & 11\\ 
\hline
1 & - & - & - & - & - & - & - & - & - & - & - & -\\ 
2 & 1 & 0 & - & - & - & - & - & - & - & - & - & -\\ 
3 & 1 & 0 & 0 & 0 & - & - & - & - & - & - & - & -\\ 
4 & 1 & 0 & 0 & 0 & 1 & 0 & - & - & - & - & - & -\\ 
5 & 1 & 0 & 0 & 0 & 0 & 0 & 0 & 0 & - & - & - & -\\ 
6 & 1 & 0 & 0 & 0 & 0 & 0 & 0 & 0 & 1 & 0 & - & -\\ 
7 & 1  & 0  & 0  & 0  & 0  & 0  & 0  & 0  & 1  & 0  & 0  & 1 \\ 
\hline
\end{tabular}

 }

\smallskip

 1 hairs 

\scalebox{ 1 }{
 \begin{tabular}{|g|D|D|D|D|D|D|D|D|D|D|D|D|}
 \rowcolor{Gray}
\hline
l,m & 0 & 1 & 2 & 3 & 4 & 5 & 6 & 7 & 8 & 9 & 10 & 11\\ 
\hline
1 & 1 & - & - & - & - & - & - & - & - & - & - & -\\ 
2 & 1 & 0 & 0 & - & - & - & - & - & - & - & - & -\\ 
3 & 1 & 0 & 0 & 0 & 0 & - & - & - & - & - & - & -\\ 
4 & 1 & 0 & 0 & 0 & 1 & 0 & 0 & - & - & - & - & -\\ 
5 & 1 & 0 & 0 & 0 & 0 & 0 & 0 & 1 & 0 & - & - & -\\ 
6 & 1  & 0  & 0  & 0  & 0  & 0  & 0  & 0  & 1  & 0  & 1  & -\\ 
\hline
\end{tabular}

 }

\smallskip

 2 hairs 

\scalebox{ 1 }{
 \begin{tabular}{|g|D|D|D|D|D|D|D|D|D|D|D|}
 \rowcolor{Gray}
\hline
l,m & 0 & 1 & 2 & 3 & 4 & 5 & 6 & 7 & 8 & 9 & 10\\ 
\hline
1 & 1 ($ s_{[2]}$) & 0 & - & - & - & - & - & - & - & - & -\\ 
2 & 1 ($ s_{[2]}$) & 0 & 0 & 0 & - & - & - & - & - & - & -\\ 
3 & 1 ($ s_{[2]}$) & 0 & 0 & 0 & 1 ($ s_{[2]}$) & 0 & - & - & - & - & -\\ 
4 & 1 ($ s_{[2]}$) & 0 & 0 & 0 & 1 ($ s_{[2]}$) & 0 & 1 ($ s_{[1, 1]}$) & 0 & - & - & -\\ 
5 & 1 & 0 & 0 & 0 & 0 & 0 & 0 & 3 & 1 & 1 & -\\ 
\hline
\end{tabular}

 }

\smallskip

 3 hairs 

\scalebox{ 1 }{
 \begin{tabular}{|g|D|D|D|D|D|D|D|D|D|D|}
 \rowcolor{Gray}
\hline
l,m & 0 & 1 & 2 & 3 & 4 & 5 & 6 & 7 & 8 & 9\\ 
\hline
1 & 1 ($ s_{[3]}$) & 0 & 1 ($ s_{[1, 1, 1]}$) & - & - & - & - & - & - & -\\ 
2 & 1 ($ s_{[3]}$) & 0 & 0 & 0 & 0 & - & - & - & - & -\\ 
3 & 1 ($ s_{[3]}$) & 0 & 0 & 0 & 3 ($ s_{[3]}$ + $s_{[2, 1]}$) & 0 & 2 ($ s_{[2, 1]}$) & - & - & -\\ 
4 & 1 & 0 & 0 & 0 & 1 & 0 & 4 & 0 & 0 & -\\ 
\hline
\end{tabular}

 }

\smallskip

 4 hairs 

\scalebox{ 1 }{
 \begin{tabular}{|g|D|D|D|D|D|D|M|D|D|D|}
 \rowcolor{Gray}
\hline
l,m & 0 & 1 & 2 & 3 & 4 & 5 & 6 & 7 & 8 & 9\\ 
\hline
1 & 1 ($ s_{[4]}$) & 0 & 3 ($ s_{[2, 1, 1]}$) & 0 & - & - & - & - & - & -\\ 
2 & 1 ($ s_{[4]}$) & 0 & 0 & 0 & 2 ($ s_{[2, 2]}$) & 3 ($ s_{[2, 1, 1]}$) & - & - & - & -\\ 
3 & 1 ($ s_{[4]}$) & 0 & 0 & 0 & 6 ($ s_{[4]}$ + $s_{[3, 1]}$ + $s_{[2, 2]}$) & 0 & 11 ($ 2s_{[3, 1]}$ + $s_{[2, 2]}$ + $s_{[2, 1, 1]}$) & 0 & - & -\\ 
\hline
\end{tabular}

 }

\smallskip

 5 hairs 

\scalebox{ 1 }{
 \begin{tabular}{|g|D|D|D|D|D|M|D|D|D|D|}
 \rowcolor{Gray}
\hline
l,m & 0 & 1 & 2 & 3 & 4 & 5 & 6 & 7 & 8 & 9\\ 
\hline
1 & 1 ($ s_{[5]}$) & 0 & 6 ($ s_{[3, 1, 1]}$) & 0 & 1 ($ s_{[1, 1, 1, 1, 1]}$) & - & - & - & - & -\\ 
2 & 1 ($ s_{[5]}$) & 0 & 0 & 0 & 10 ($ s_{[3, 2]}$ + $s_{[2, 2, 1]}$) & 15 ($ s_{[3, 1, 1]}$ + $s_{[2, 2, 1]}$ + $s_{[2, 1, 1, 1]}$) & 0 & - & - & -\\ 
3 & 1  & 0  & 0  & 0  & 10  & 0  & 41  & 0  & 9  & -\\ 
\hline
\end{tabular}

 }
  \caption{Homology dimensions for $\FG_n$ with $n$ even.}
  \label{fig:forested_odd}
\end{figure}

\appendix

\section{An auxiliary Proposition for the forested graph complexes}
\label{app:FGC dc bottom proof}

In this section we shall prove the second part of Proposition \ref{prop:FGC dc bottom} above, namely the following statement:

\begin{prop}\label{prop:FGC dc bottom app}
For each $n$ and $g\geq 1$ the homology of $\FGC_{n}^{bl,g\lp}$ with respect to the edge contraction differential $d_c$ is concentrated in excess zero, corresponding to graphs all of whose vertices are trivalent.
\end{prop}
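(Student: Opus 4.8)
Fix a loop order $g\geq 1$; since $d_c$ preserves loop order it suffices to treat $\FGC_n^{bl,g\lp}$. The plan is to decompose this complex according to the \emph{core} of a forested graph, reduce by a Künneth argument to a local statement about trees, and feed that statement from the first part of Proposition \ref{prop:FGC dc bottom}. For a forested graph $(\gamma,T)$ let $\bar\gamma:=\gamma/T$ be obtained by contracting all marked edges; then $\gamma$ is recovered by ``blowing up'' each vertex $v$ of $\bar\gamma$ into a tree $\tau_v$ whose leg set is the set $H_v$ of half-edges of $\bar\gamma$ at $v$, the unmarked half-edges being distributed over the vertices of $\tau_v$. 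Contracting a marked edge leaves $\bar\gamma$ unchanged, so the core is $d_c$-invariant and
\[
  (\FGC_n^{bl,g\lp},d_c)=\bigoplus_{\bar\gamma}B^{bl}(\bar\gamma),
\]
the sum over bridgeless graphs $\bar\gamma$ with internal vertices of valence $\geq 3$ (possibly with hairs and tadpoles), where $B^{bl}(\bar\gamma)$ is the subcomplex of bridgeless blow-ups. One checks that an unmarked edge of $\gamma$ is a bridge iff the corresponding edge of $\bar\gamma$ is, and that a tree edge $e\in\tau_v$ is a bridge of $\gamma$ iff it separates $H_v$ into two sets each of which is a union of the ``branches of $v$'' described below; hence $\gamma$ is bridgeless precisely when $\bar\gamma$ is bridgeless and every $\tau_v$ is \emph{admissible} in the sense below.

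Fix a bridgeless core $\bar\gamma$ and a vertex $v$. The connected components of $\bar\gamma\setminus v$, together with one two-element class for each tadpole of $\bar\gamma$ at $v$ (its two half-edges), induce a partition $H_v=P_1\sqcup\cdots\sqcup P_k$, and because $\bar\gamma$ is bridgeless each $|P_i|\geq 2$ (a singleton class would be a bridge of $\bar\gamma$). Call a tree $\tau_v$ with leg set $H_v$ \emph{admissible} if no internal edge of $\tau_v$ separates its legs into two nonempty unions of the $P_i$. The crucial observation is that admissibility of $\tau_v$ depends only on $\bar\gamma$ near $v$, not on the blow-ups at the other vertices; hence
\[
  B^{bl}(\bar\gamma)\;\cong\;\Bigl(\,\bigotimes_{v\in V\bar\gamma}B^{bl}_v\Bigr)^{\Aut\bar\gamma}
\]
as complexes (with the usual orientation sign twist), where $B^{bl}_v$ is the subcomplex of the tree-contraction complex (the bar construction of the cyclic commutative operad) on the leg set $H_v$ spanned by admissible trees, graded by the number of internal edges, with $d_c$ acting on the tree tensor factors. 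Over $\Q$ the $\Aut\bar\gamma$-(co)invariants are exact and Künneth applies, so it suffices to prove that $H(B^{bl}_v,d_c)$ is concentrated in top degree (all internal vertices trivalent, i.e.\ binary trees): tensoring such factors together yields homology supported on forested graphs all of whose vertices are trivalent, that is, of excess zero.

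It remains to prove this local statement. When $k=1$ there is no admissibility constraint and the assertion is exactly the Koszulness of the cyclic commutative operad, i.e.\ the first part of Proposition \ref{prop:FGC dc bottom}. For general $k$ the idea is to build a contracting homotopy of $B^{bl}_v$ in degrees below the top: fix a class $P_1$ and a leg $\ell_0\in P_1$, and let $h$ split $\ell_0$ off onto a new trivalent vertex; since $|P_1|\geq 2$ the new edge induces the split $\{\ell_0\}\mid(\text{rest})$, which is not a union of the $P_i$, while the splits induced by the old edges are unchanged, so $h$, $hd_c$ and $d_ch$ all preserve the admissible subcomplex. The main obstacle is to upgrade this single move into an honest acyclicity-below-top proof: $h$ alone only reduces trees in which $\ell_0$ already sits at a $\geq 4$-valent vertex, so one must iterate it together with an auxiliary filtration (for instance by the valence of the vertex carrying $\ell_0$, or by a weight counting non-binary vertices), or, equivalently, identify $B^{bl}_v$ with the bar construction of a Koszul quotient co-operad of $\Com$ and quote Koszulness directly. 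I expect this operadic/combinatorial acyclicity statement to be the real content; the preceding two steps are bookkeeping with the contraction differential and the blow-up description of forested graphs.
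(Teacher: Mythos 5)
Your reduction follows the same route as the paper: decompose by the core $\bar\gamma=\gamma/T$ (the paper's ``meta-graph''), note that $d_c$ acts vertex-wise on the tree decorations, pass to $\Aut(\bar\gamma)$-invariants over $\Q$, and reduce to the local claim that the complex of trees on the leg set $H_v$ having no edge that separates the legs into two unions of the colour classes (your ``admissible'' trees; the paper's $\underline p$-bridgeless trees $V_{r,\underline p}$) has homology concentrated in top degree. The difficulty is that you do not prove this local claim, and it is the actual content of the proposition --- as you yourself say. Your homotopy sketch (split a leg $\ell_0\in P_1$ off onto a new vertex) is not carried out: it only affects trees in which $\ell_0$ sits at a $\geq 4$-valent vertex, the ``auxiliary filtration'' or ``Koszul quotient co-operad'' needed to finish is named but never produced, and as stated the single move is not even well defined, since a new vertex carrying only $\ell_0$ and the new edge is bivalent, not trivalent. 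The paper closes this gap with Lemma \ref{lem:bridgeless Koszul}, proved by induction on the number of legs: one filters by the number of edges off the ``backbone'' path joining two legs of equal colour (such edges are never $\underline p$-bridges), identifies the associated graded with the bar complex of a free commutative algebra along the backbone (forcing trivalent backbone vertices in homology), and applies the induction hypothesis to the ribs. Some argument of this kind is indispensable.

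There is also a smaller but genuine error: your claim that bridgelessness of $\bar\gamma$ forces every colour class $P_i$ to have at least two elements fails when $v$ carries hairs, since a hair is a singleton component of $\bar\gamma\setminus v$ and is not counted as a bridge by convention. What you need (and use) is only that \emph{some} class at each vertex has $\geq 2$ elements, and this is precisely where the hypothesis $g\geq 1$ must enter: if every half-edge at $v$ had its own colour, each would have to be a hair (an internal edge to a private component is a bridge), so $\bar\gamma$ would be a corolla of loop order $0$. Your write-up never invokes $g\geq 1$ at this point, even though the statement is false for $g=0$, where the bridgeless complex degenerates to corollas of arbitrary excess.
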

Note that the Proposition would immediately follow from the Koszulness of the commutative (or Lie) operad if in the definition of $\FGC_{n}$ we would not restrict to bridgeless graphs.
To show the more complicated version above, we shall use a "bridgeless" variant of the Koszulness of $\Com$.

To this end we introduce some notation.
We consider the cyclic bar construction of the commutative operad $\bCom$.
Elements of $\bCom((r))$ can be seen as linear combinations of non-rooted trees with $r$ numbered leaves.
The differential acts by edge contraction.
We fix an integer $N$ and a partition 
\[
  \underline p = (p_1,\dots,p_N)
\]
of $r$, i.e., $r=p_1+\cdots +p_N$.
The partition can be considered as a coloring of the leaves of trees in $\bCom((r))$, with the first $p_1$ leaves of color 1, the following $p_2$ leaves of color 2 etc.

\begin{defi}
  Let $T$ be an unrooted tree with $r$ leaves and let $\underline p$ be a partition of $r$, thought of as a coloring of the $r$ leaves.
  Then we say that an edge $e$ in $T$ is a $\underline p$-bridge, if removing the edge disconnects the tree $T$ in components $T_1$ and $T_2$ whose sets of leaf colors are disjoint.
  We say that $T$ is $\underline p$-\emph{bridgeless} if $T$ does not have a $\underline p$-bridge.
\end{defi}

The property of being $\underline p$-bridgeless is stable under edge contraction. We can define the subcomplex 
\[
V_{r, \underline p} \subset \bCom((r))
\]
spanned by the $\underline p$-bridgeless trees.

\begin{lemma}\label{lem:bridgeless Koszul}
Fix integers $r\geq 3$ and $N$ with $r\geq N+1$ and a partition $\underline p=(p_1,\dots,p_N)$ of $r$ with $p_j\geq 1$ for $j=1,\dots,N$. 
Then $H^\bullet(V_{r, \underline p})$ is concentrated in the bottom degree $\bullet = 3-r$, corresponding to trivalent trees.
\end{lemma}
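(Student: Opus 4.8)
The plan is to reduce the lemma to the Koszulness of the (cyclic) commutative operad and then control the bridgeless restriction combinatorially. Recall the classical fact that $H^\bullet(\bCom((r)))$ is concentrated in the bottom degree $3-r$, corresponding to binary (trivalent) trees: equivalently, the poset of unrooted $r$-leaf trees ordered by edge contraction is Cohen--Macaulay, and its order complex is a wedge of spheres of dimension $r-4$. Since contracting an edge of a $\underline p$-bridgeless tree again yields a $\underline p$-bridgeless tree, the set of $\underline p$-bridgeless trees is an order ideal in that poset, and $V_{r,\underline p}$ is precisely the chain complex of that ideal. The task is thus to show that this order ideal is again Cohen--Macaulay, i.e.\ that its homology sits in top dimension.

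My preferred route is a discrete Morse argument directly on the order ideal: construct an acyclic matching whose critical cells are all trivalent trees, which instantly yields the concentration statement. One starts from a standard acyclic matching on the full tree poset (all of whose critical cells are trivalent) and modifies the ``expansion'' moves so that they stay within bridgeless trees. The key local observation is that in a bridgeless tree every internal vertex has two adjacent branches whose colour sets intersect --- otherwise the edge separating one such branch from the rest would be a $\underline p$-bridge --- so a vertex of valence $\ge 4$ can always be split into two $\ge 3$-valent vertices joined by a non-$\underline p$-bridge edge, and conversely only $\underline p$-bridge edges are ever excluded from contraction while the differential never contracts them. An alternative, more homological route is an induction on $\sum_j (p_j-1)=r-N$: the base cases $r-N\le 1$ are either vacuous or the case of a single doubled colour, where $V_{r,\underline p}$ is, up to a degree shift, the cellular cochain complex of a permutohedron with one-dimensional cohomology in the vertex $=$ trivalent degree; the inductive step removes one leaf of a doubled colour and compares $V_{r,\underline p}$ with $V_{r-1,\underline p^-}$ via a ``delete-and-smooth'' map, the relative term being again a complex of bridgeless trees on fewer leaves.

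The main obstacle differs between the two routes. For the homological route, the comparison pins down the cohomology only up to a possible spurious class one degree above the bottom (in degree $4-r$): the associated long exact sequence --- equivalently the spectral sequence of the filtration of $\bCom((r))$ by number of $\underline p$-bridges --- leaves room for it, and it does not obviously vanish, because the ``blocks'' obtained by cutting all bridges need not be bridgeless complexes to which the induction directly applies (a block may be a corolla on pairwise differently coloured leaves, whose homology sits in the wrong degree). Closing this gap requires carrying a sharper inductive statement, namely concentration together with surjectivity of the comparison maps onto bottom-degree homology, which forces the would-be spurious class to be a boundary. For the discrete Morse route the obstacle is instead pure bookkeeping: writing the matching down explicitly, verifying it is acyclic, and checking that bridgelessness is preserved at every expansion and that the critical cells are exactly trivalent trees. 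I would pursue the discrete Morse route, since it avoids the degree ambiguity entirely.
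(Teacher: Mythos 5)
Your submission is a strategy outline rather than a proof: both routes you describe terminate in an obstacle that you name but do not overcome, and the one you say you would pursue (discrete Morse theory) has its real difficulty mislabelled as ``pure bookkeeping.'' The problem is this: if you restrict an acyclic matching on the full poset of $r$-leaf trees to the order ideal of $\underline p$-bridgeless trees, acyclicity is indeed inherited, but the matching is no longer perfect --- every bridgeless tree whose designated expansion partner acquires a $\underline p$-bridge becomes a new critical cell, and these are exactly the non-trivalent trees you need to kill. Your local observation (that a $\ge 4$-valent vertex of a bridgeless tree always admits \emph{some} expansion by a non-bridge edge) guarantees that candidate partners exist, but it does not produce a globally consistent re-matching, and re-proving acyclicity after an ad hoc re-pairing is the entire mathematical content of the lemma, not an afterthought. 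Moreover, the ``standard acyclic matching on the full tree poset with all critical cells trivalent'' that you take as a starting point is not an off-the-shelf object; the usual proofs of concentration for $\bCom((r))$ go through Koszulness of $\Com$ or shellability, not an explicit matching. For the homological route you candidly identify the unresolved spurious class in degree $4-r$ yourself.

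For comparison, the paper's argument is a clean induction on $r$ that sidesteps both obstacles. Since $r\ge N+1$, two leaves share a colour; one filters $V_{r,\underline p}$ by the number of edges \emph{off} the unique path (the ``backbone'') joining those two leaves. The decisive point is that backbone edges can never be $\underline p$-bridges, so the associated graded differential, which contracts only backbone edges, is the full, unrestricted bar differential of a free commutative algebra with generators the ``ribs''; its homology is the known cofree cocommutative coalgebra, concentrated on trees with trivalent backbone vertices and antisymmetrised ribs. The next differential acts inside the ribs, where each rib complex is again of the form $V_{r',\underline p'}$ with $r'<r$, and the induction hypothesis applies. You may want to see whether your delete-and-smooth induction can be repaired by carrying the stronger statement you mention, but as written neither of your routes establishes the lemma.
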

Note that if $r=N$ the statement of the Lemma is false: In this case each leaf has its own color and every internal edge of a tree is a $\underline p$-bridge.
This means that $V_{r, \underline p}$ is 1-dimensional, spanned by the unique tree without edges, of degree -1.

\begin{proof}
We perform an induction on $r$. For $r=3$ the Lemma is trivially true.
We then perform the induction step $r-1\to r$, assuming that the Lemma is true up to arity $r-1$, and for all partitions.

By assumption there are 2 leaves of the same color, and without loss of generality we can assume that leaves 1 and 2 are of color 1.

We endow $V_{r, \underline p}$ with a filtration 
\[
0 = \mF^{-1} V_{r, \underline p} \subset \mF^0 V_{r, \underline p} \subset \cdots
\]
such that $\mF^q V_{r, \underline p}$ is spanned by trees $T$ that have at most $q$ edges \emph{not} contained in the unique path connecting leaves 1 and 2.
We claim that the $E^2$ page of the spectral sequence associated to our filtration is concentrated in degree $3-r$, thus showing the Lemma.

We look at the first page of the associated spectral sequence, that is, the associated graded. It can be identified with $V_{r, \underline p}$ with differential $d_0$ contracting (only) edges along the unique path connecting leaves 1 and 2, leaving the other edges untouched.

Let us call that unique path in a tree $T$ the \emph{backbone} of the tree.
Let us call the individual subtrees $T_1, T_2,\dots$ connected to the backbone the ribs of the graph, see the following picture:
\[
\begin{tikzpicture}
\node (v1) at (0,0) {$1$};
\node (v2) at (4,0) {$2$};
\node[int] (w1) at (1,0) {};
\node[int] (w2) at (2,0) {};
\node[int] (w3) at (3,0) {};
\node[ext] (T0) at (0,-1) {$T_1$};
\node[ext] (T1) at (1,-1) {$T_2$};
\node[ext] (T2) at (2,-1) {$T_3$};
\node[ext] (T3) at (3,-1) {$T_4$};
\node[ext] (T4) at (4,-1) {$T_5$};
\draw (w1) edge (T1) edge (v1) edge (w2) 
(w3) edge (w2) edge (v2) edge (T3) 
(T1) edge +(-.3,-.5) edge +(0,-.5) edge +(.3,-.5)
(T2) edge +(-.3,-.5) edge +(0,-.5) edge +(.3,-.5) edge (w2)
(T3) edge +(-.3,-.5) edge +(0,-.5) edge +(.3,-.5)
(T0) edge +(-.3,-.5) edge +(0,-.5) edge +(.3,-.5) edge (w1) 
(T4) edge +(-.3,-.5) edge +(0,-.5) edge +(.3,-.5) edge (w3)
;
\end{tikzpicture}
\]
Note that the backbone edges can never be $\underline p$-bridges, because the backbone is the unique path connecting leaves 1 and 2 of like color.
The differential $d_0$ leaves the ribs the same, and just contracts backbone edges. The resulting complex is isomorphic to a direct summand of the (associative) bar complex of a free commutative algebra, with generators corresponding to the ribs.
It is well known that the homology of the associative bar complex of a free commutative algebra is a cofree cocommutative coalgebra, with degree shifted generators.
In our setting, this means that the homology $H(V_{r, \underline p},d_0)$ 
is freely generated by linear combinations of trees all of whose backbone vertices are trivalent, and the ribs are attached anti-symmetrically:
\[
  \sum_{\sigma \in S_3}
  (-1)^\sigma
    \begin{tikzpicture}
    \node (v1) at (0,0) {$1$};
    \node (v2) at (4,0) {$2$};
    \node[int] (w1) at (1,0) {};
    \node[int] (w2) at (2,0) {};
    \node[int] (w3) at (3,0) {};
    \node[ext] (T1) at (1,-1) {$\scriptstyle T_{\sigma(1)}$};
    \node[ext] (T2) at (2,-1) {$\scriptstyle T_{\sigma(2)}$};
    \node[ext] (T3) at (3,-1) {$\scriptstyle T_{\sigma(3)}$};
    \draw (w1) edge (T1) edge (v1) edge (w2) 
    (w3) edge (w2) edge (v2) edge (T3) 
    (T1) edge +(-.3,-.7) edge +(0,-.7) edge +(.3,-.7)
    (T2) edge +(-.3,-.7) edge +(0,-.7) edge +(.3,-.7) edge (w2)
    (T3) edge +(-.3,-.7) edge +(0,-.7) edge +(.3,-.7)
    ;
    \end{tikzpicture}
\]

The differential $d_1$ on the next page of our spectral sequence acts by contracting a non-root edge inside a rib.
The resulting complex decomposes into tensor products of complexes for individual ribs, which can in turn be seen to be isomorphic to $V_{r', \underline p'}$ for some $r'<r$. Applying the induction hypothesis hence shows that the homology of the $E^1$-page of our spectral sequence can be represented by trivalent trees.
\end{proof}

\begin{proof}[Proof of Proposition \ref{prop:FGC dc bottom app}]

By definition, the forested graph complex with bridges is the (appropriately degree shifted) Feynman transform of the cyclic bar construction of $\Com$. 
We can hence consider a forested graph as a "meta"-graph whose vertices are decorated by elements of the cyclic bar construction of $\Com$.
The shape of the meta-graph is unaltered by the differential $d_c$ acting only on the decorations at the meta-vertices.
Hence the forested graph complex with bridges decomposes into direct summands of complexes of the form 
\[
\otimes_G^\Lambda \bCom  \cong \left(\Lambda\otimes \otimes_{v\in V G} (\bCom)((r_v)) \right)_{Aut(G)} 
\]
for meta-graphs $G$, and $\Lambda$ a one-dimensional representation of $Aut(G)$ taking care of signs and degree shifts.

To pass to the bridgeless version, let us consider a metagraph $G$ and one meta-vertex $v$ in $G$, with incident half-edges $h_1,\dots,h_r$. Deleting $v$ the graph decomposes $G$ into a union of say $N$ connected components. We color the half-edge $h_j$ by the connected component they belong to (after deleting $v$).
The bridgeless forested graph complex then decomposes into summand of the form
\[
\left(\Lambda\otimes \otimes_{v\in VG} V_{r_v,\underline p_v} \right)_{Aut(G)} ,
\]
for bridgeless metagraphs $G$, with $\underline p_v$ the partition corresponding to the above coloring of the half-edges at $v$. 
To apply the Lemma above and conclude, we hence just have to check that the coloring $\underline p_v$ is such that at least two half-edges have the same color, for each vertex $v$.
Assume there was some $v$ such that each of the $r$ half-edges has its own color, i.e., $G$ decomposes into $r$ connected components after deleting $v$.
Then necessarily each of the half-edges must be an external leg, since if one belonged to an internal edge, that edge would be a bridge.
But not each half-edge can be an external leg, since otherwise $G$ would be of loop order $0$, contradicting our assumption $g\geq 1$. 
Hence Lemma \ref{lem:bridgeless Koszul} is applicable and the Proposition follows.
\end{proof}

\printbibliography

\end{document}